\numberwithin{equation}{section}
\def \IZ{\mathbb Z}
\def \IT{\mathbb T}
\def \IR{\mathbb R}
\newcommand{\gre}{{\epsilon}}
\newcommand{\arw}{\rightarrow} %(Example: f: \RR \arw \RR)
\newcommand{\mb}{\mathbf}
\newcommand{\art}{\ar@{-}}
\newtheorem{thm}{Theorem}[section]
\theoremstyle{definition} \newtheorem{defin}[thm]{Definition}
\theoremstyle{plain} \newtheorem{lemma}[thm]{Lemma}
\theoremstyle{plain} \newtheorem{prop}[thm]{Proposition}
\theoremstyle{remark} \newtheorem{rem}[thm]{Remark}
\theoremstyle{plain} \newtheorem{cor}[thm]{Corollary}
\theoremstyle{remark} \newtheorem{exam}[thm]{Example}
\newcommand{\trm}[1]
{\textrm{#1}}
\newcommand{\SL}{\textrm{SL}}
\newcommand{\HR}{\textrm{\bf{H}}(\IR)}
\newcommand{\HZ}{\textrm{\bf{H}}(\IZ)}
\newcommand{\XH}{X_\textrm{\bf{H}}}
\newcommand{\Th}{\Theta^\textrm{\bf{H}}}
\newcommand{\Aut}{\textrm{Aut}}
\newcommand{\gx}{g_{\textrm{\bf{H}}}}
\newcommand{\Orb}{\mathcal{O}}
\newcommand{\quo}{\backslash}
\newcommand{\mh}{\mu_\mathbf{H}}
\newcommand{\me}{\mu_\mathbf{E}}
\newcommand{\vct}[3]
{\left(\begin{array}{c}#1 \\ #2 \\ #3\end{array}\right)}
\newcommand{\aut}[2]
{\left(\begin{array}{cc}#1 & #2 \\ 0 & 1\end{array}\right)}
\newcommand{\mat}[4]
{\left(\begin{array}{cc}#1 & #2 \\ #3 & #4\end{array}\right)}
\newcommand{\vc}[2]
{\left( \begin{array}{c} #1 \\ #2 \end{array}\right)}
\author{Jayadev S. Athreya and Ioannis Konstantoulas}
\title{Lattice deformations in the Heisenberg group}
\begin{document}
\begin{abstract}
The space of deformations of the integer Heisenberg group under the action of 
$\Aut(\HR)$ is a homogeneous space for a non-reductive group.  We analyze its 
structure as a measurable dynamical system and obtain mean and variance 
estimates for Heisenberg lattice point counting in measurable subsets of 
$\IR^3$; in particular, we obtain a random Minkowski-type theorem.  Unlike 
the Euclidean case, we show there are necessary geometric conditions on the 
sets that satisfy effective variance bounds.
\end{abstract}
\maketitle
\section{Introduction}

Minkowski's theorem in the geometry of numbers shows that in any sufficiently 
large convex centrally symmetric open set in $\IR^n$ there are non-zero 
integral points.  The asymptotic count of the number of such points is well 
understood in terms of the volume of the set, but the optimal error term is 
hard to obtain and can depend sensitively on the regularity of the boundary. 

It is an old idea that to understand a particular instance of a complicated 
system, it is beneficial to understand its typical behavior.  From the 
classical viewpoint of the `metric theory of equidistribution', $\IZ^n$ is 
one of many lattices in $\IR^n$, and their average behavior is easier to 
grasp than the individual point counting stories each lattice has to tell.   
In the more modern conception of homogeneous dynamics, $\IZ^n$ is a point in 
the space of unimodular lattices in $\IR^n$, a finite volume homogeneous 
space of $\SL(n,\IR)$.  Thus, we can formulate questions about 
the average count of lattice points and their variance for a given set, with respect to the Haar probability measure on this space.  

The first steps in that program were taken by Siegel~\cite{Sieg}  who 
considered averaged lattice point counting over unimodular lattices and gave 
a mean value formula. Subsequently, Rogers~\cite{Rog} studied higher moments of functions on the space of 
lattices and obtained a variance bound in $\IR^n$ with 
$n\geq 3$.  

Rogers's work was used by W. Schmidt in \cite{Schm} to show that for a nested 
family of Borel sets in $\IR^n$ with unbounded finite volumes, almost all 
lattices have the expected number of lattice points with explicit discrepancy 
bounds.  The hardest part in Schmidt's work was the case $n=2$ where most of 
Rogers's identities were not applicable: there was no variance bound in 
$\IR^2$ to rely upon and he had to work `by hand' using classical estimates 
from analytic number theory and the action of $\SL(2,\IZ)$ on pairs of 
integer vectors.  

The issue of a variance bound was also treated by Randol~\cite
{Rand} who obtained Rogers-type variance estimates for primitive lattice 
point counting in disks using the spectral decomposition of 
$L^2_c(\SL(2,\IR)/\SL(2,\IZ))$.  Using Randol's results and a deeper analysis 
of the Siegel operator (defined in \cite{Sieg}) for $\SL(2,\IR)$, Athreya-Margulis obtained in \cite{a_m} a variance estimate for primitive lattice 
point counting allowing arbitrary Borel sets in $\IR^2$.  

The results above provide significant information on the average behavior 
of Euclidean lattices.  A natural subsequent question that arises involves lattices in 
non-abelian groups.  The recent work of Garg, Nevo and Taylor \cite{GNT} 
addresses the lattice point count of $\IZ^{2n+1}$ in large centered balls in 
certain norms homogeneous with respect to the dilations of the $(2n+1)-$
dimensional Heisenberg group.

The present work provides variance bounds for random lattices in the case of 
the $3$-dimensional Heisenberg group.  We will see that even in this modest 
excursion outside the abelian world, the average behavior of lattices is much 
more erratic than the Euclidean case.  In particular, we show in Proposition \ref{high-disc} that very 
simple sets $S\subset \IR^3$ have no useful variance bound for the 
number of primitive points of a random Heisenberg lattice in $S$.  On the 
other hand, we also provide a natural class of sets for which optimal 
variance bounds exist and discuss extensions (see eg. Corollary \ref{boxes}).

Our main technique is to realize the space of Heisenberg lattices as a fiber bundle over the space of Euclidean lattices, use the action of the automorphism group of $\HR$ on it and relate it to the action of $\SL(2,\IR)$ on the space of Euclidean lattices.
Since the space of Heisenberg lattices embeds into the space $X_3$ of Euclidean lattices in $\IR^3$, our results can also be seen as looking at orbits in $X_3$ of certain lower dimensional subgroups of $\SL(3,\IR)$.

\subsection{Heisenberg group}The real 
Heisenberg group $\HR$ has $\IR^3$ as the underlying manifold with the smooth 
addition law (see Section \ref{EH_lat} for details) 
\begin{equation*}
\vct{r}{s}{t}+\vct{r'}{s'}{t'}=\vct{r+r'}{s+r'}{t+t'+rs'-sr'}.
\end{equation*} 
With this law $\IZ^3$ becomes a discrete subgroup denoted by $\HZ$ which will 
be taken to have co-volume $1$ with respect to the Lebesgue (Haar) measure on $\IR^3$.
Let $\XH$ be the orbit of $\IZ^3$ under the action of the connected 
component of the identity of the automorphism group of $\HR$ preserving 
volume $\textrm{Aut}^+_1(\HR)$ (this is a connected Lie group whose structure we describe in Section 
\ref{EH_lat}).  This set has the structure of a finite volume homogeneous 
space with the projected Haar measure from $\textrm{Aut}^+_1(\HR)$.  

Our work involves the following class of sets: consider a Borel set $A\subset 
\IR^2$ of finite measure greater than $1$.  The \emph{epsilon-plate} over $A$ 
at level $z$ is the set $$A^z_\gre: = (z,0,0)+A\times [0,\epsilon)\subset 
\IR^3.$$  A point $l=(m,n,k)$ in a Heisenberg lattice $g\IZ^3$ is called 
primitive if $\gcd(m,n)=1$ (this is the correct analogue of primitivity in 
$\HZ$ as we shall see in Section \ref{EH_lat}).
  
The main result provides the following average deviation bound for 
primitive lattice point count.
\begin{thm}[See Corollary \ref{cheby}]
Let $\mh$ be the projected Haar measure on the space of Heisenberg lattices $\XH$ that 
are deformations of the standard lattice $\HZ$ and $m$ the Lebesgue measure in 
$\IR^3$.  Suppose $0<\gre <1$.  We have
\begin{equation}
\mh\left(\Lambda\in \XH: \left|\# (A^z_\gre\cap \Lambda_{\textrm{prim}}) -\frac{m(A^z_\gre)}{\zeta(2)}\right|>r\sqrt{m(A^z_\gre)}\right)\leq \frac{C}{r^2}.
\end{equation}
where $C$ is an absolute constant.
\end{thm}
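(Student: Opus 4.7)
\emph{Proof plan.} The target is a Chebyshev-type tail bound, so my approach is to control the first two moments of the random variable
$$N(\Lambda) := \#\bigl(A^z_\gre \cap \Lambda_{\textrm{prim}}\bigr)$$
on $(\XH, \mh)$ and then apply Chebyshev's inequality directly. Writing $S = A^z_\gre$, I will aim for $\int_{\XH} N \, d\mh = m(S)/\zeta(2)$ and $\int_{\XH} (N - m(S)/\zeta(2))^2 \, d\mh \leq C \cdot m(S)$, with $C$ absolute.

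First I would verify the mean value formula. Because the primitivity condition $\gcd(m,n)=1$ depends only on the first two coordinates of a vector in $\IZ^3$, the sum defining $N$ is essentially a sum over primitive vectors $(m,n)\in\IZ^2$ of the one-dimensional measure of the slice of $S$ above that point, averaged against $\mh$. Realizing $\XH$ as a fiber bundle over the space $X_2$ of unimodular lattices in $\IR^2$ (as indicated in the introduction) and disintegrating $\mh$ accordingly reduces the base integral to a classical Siegel-type primitive lattice point integral on $X_2$, which produces the Euclidean area of the projection of $S$ divided by $\zeta(2)$, while the fiber integral recovers the missing third dimension and recombines to yield the three-dimensional volume $m(S)$.

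The variance bound is the main obstacle; indeed the paper explicitly notes (Proposition \ref{high-disc}) that no such bound can hold for generic Borel sets, so the plate geometry of $A^z_\gre$ must be used in an essential way. My plan is to expand $\int_{\XH} N^2 \, d\mh$ as a sum over ordered pairs of primitive lattice vectors. The diagonal-type contribution (pairs that project to the same line in $\IZ^2$) is controlled by a factor proportional to $\gre \cdot \textrm{area}(A) \asymp m(S)$ once the fiber integration over the third coordinate is carried out, since the plate has thickness only $\gre$ in that direction and the shift by $(z,0,0)$ is absorbed by the Heisenberg law. For the off-diagonal terms, disintegrating over $X_2$ and passing to the induced $\SL(2,\IR)$-action on pairs of primitive integer vectors reduces the estimate to the Rogers--Randol--Athreya--Margulis type second moment bounds for $\IR^2$ already cited in the introduction; the fact that a plate has \emph{product shape} $A\times [0,\gre)$ (up to a third-coordinate shift preserved by translation) is exactly what enables this decoupling and prevents the pathologies of Proposition \ref{high-disc}.

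With both estimates in place, Chebyshev's inequality yields
$$\mh\!\Bigl(\bigl|N - m(S)/\zeta(2)\bigr| > r\sqrt{m(S)}\Bigr) \leq \frac{\int_{\XH}(N - m(S)/\zeta(2))^2\, d\mh}{r^2\, m(S)} \leq \frac{C}{r^2},$$
which is the claim. The genuinely hard step is the variance bound: both the non-abelian multiplication law on $\HR$ and the rigid product shape of $A^z_\gre$ must be leveraged simultaneously to keep the second moment linear in $m(S)$ with an absolute constant, uniformly over $\gre\in(0,1)$ and over the translation parameter $z$.
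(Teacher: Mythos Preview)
Your plan is correct and matches the paper's approach essentially step for step: the mean is Proposition~\ref{siegel_h}, the second moment is expanded exactly as you describe with the fiber integral producing the correlation factors $Cor_{\vec m,\vec n}(\gre,z)$ of Definition~\ref{cor_def}, the diagonal/off-diagonal split is the $D=0$ versus $D\neq 0$ dichotomy of Proposition~\ref{orbits} (yielding weights $\gre$ and $\gre^2$ respectively), and the off-diagonal piece is then bounded by the Euclidean variance estimate of Proposition~\ref{e_disc} to give Theorem~\ref{centr}, from which Corollary~\ref{cheby} follows by Chebyshev. The only refinement the paper adds beyond your sketch is the explicit orbit computation (Lemma~\ref{orbit_str} and Proposition~\ref{orbits}) showing that the fiber correlation is \emph{exactly} $\gre^2$ on every nonzero-determinant orbit, which is what makes the reduction to $\|\Theta_A - m(A)/\zeta(2)\|_2^2$ clean rather than merely an upper bound.
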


Combining these results with standard analytic manipulations we get 
variance bounds for sets built up from a moderate number of plates.  However, there is a gap 
between the sets for which we get bounds and those for which we prove there 
is no such bound.  This reflects the limitations of our knowledge of Euclidean 
lattice point distribution in $\IR^2$.  Despite this, regarding features of 
Heisenberg lattices that come genuinely from the action of the Heisenberg 
group, the results of sections \ref{lat_points} and \ref{distr} provide a 
comprehensive picture and one that generalizes to higher dimensional Heisenberg groups.

In subsequent work we plan to treat those higher dimensional groups combining 
an analysis of lattice point distribution of symplectic Euclidean lattices 
in $\IR^{2n}$ and the semi-direct product structure of the corresponding 
$\Aut(\textrm{\bf{H}})$.  We hope that the present paper will also serve as an 
accessible introduction to the more technical results to follow.

In the next section we provide all the relevant definitions and illustrate 
the differences between Euclidean and Heisenberg lattices.
\section{The space of Heisenberg lattices}\label{EH_lat}
In this expository section we describe the structure of the space of Heisenberg 
lattices.

\begin{defin}
The three dimensional real Heisenberg group $\HR$ is defined to be the group with 
underlying set $\IR^3$ (written as column vectors) and addition law
\begin{equation*}
\vct{r}{s}{t}+\vct{r'}{s'}{t'}=\vct{r+r'}{s+r'}{t+t'+rs'-sr'}.
\end{equation*}
The integer Heisenberg group $\HZ$ is the discrete subgroup $\HR \cap \IZ^3$.
\end{defin}
\begin{rem}
The standard symplectic form appears in the addition formula for 
$\HR$.  
Any other bilinear form would give rise to a Heisenberg group whose 
structure would be determined by the antisymmetric part of the form.  
See \cite{Aus} for the reduction and more general Heisenberg groups.
\end{rem}
\begin{prop}
Lebesgue measure in $\IR^3$ is a Haar measure for $\HR$.  The group $\HZ$ is 
a lattice in $\HR$ with a fundamental domain $[0,1)^3$.  The group of 
automorphisms of $\HR$ that preserve volume and orientation is the group 
$\trm{Aut}^+_1(\IR)=: \Aut$ consisting of matrices of the form
\begin{equation*}
\left(\begin{array}{ccc}a & b & x \\ c & d & y \\ 0 & 0 & 1\end{array}\right) \simeq \SL(2,\IR)\ltimes \IR^2
\end{equation*}
where 
\begin{equation*}
g := \left(\begin{array}{cc}a & b \\ c & d\end{array}\right)\in \SL(2,\IR)\quad\textrm{ and }\quad
\vec{v} := \left(\begin{array}{c}x \\ y \end{array}\right)\in \IR^2.
\end{equation*}
We abbreviate these elements by
\begin{equation*}
\aut{g}{\vec{v}}.
\end{equation*}
The group $\trm{Aut}^+_1(\IR)$ acts on $\HR$ by
\begin{equation}\label{action}
\aut{g}{\vec{v}}\cdot \vct{r}{s}{t} = \vc{g^*\vc{r}{s}}{t-\vec{v}^t\cdot g^*\cdot\vc{r}{s}}
\end{equation}
where $g^* = (g^{-1})^t$ is the inverse transpose.  In terms of matrix multiplication, the action is
\begin{equation*}
\aut{g}{\vec{v}}^*\vct{r}{s}{t}.
\end{equation*}
\end{prop}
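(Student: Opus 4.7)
The plan is to tackle the three claims separately: invariance of Lebesgue measure, the fundamental domain for $\HZ$, and the structure of $\Aut^+_1(\HR)$ together with the action formula.

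For the first two claims the approach is computational. First I verify that the left translation $L_{(a,b,c)}: (r,s,t)\mapsto (a+r, b+s, c+t+as-br)$ has derivative that is unipotent lower-triangular with determinant $1$; right translation is analogous, giving left- and right-invariance of the Lebesgue measure on $\IR^3$ (so $\HR$ is unimodular). For the fundamental domain, I observe that $\{(m,n,0): m,n\in\IZ\}$ sits inside $\HZ$ (the group law preserves integrality), and I present a two-step reduction: add a suitable element of this subset to move the first two coordinates into $[0,1)$, then add a central element $(0,0,k)\in\HZ$ to move the third coordinate into $[0,1)$. Uniqueness follows because any two representatives in $[0,1)^3$ differing by an element of $\HZ$ must have integer differences in every coordinate, and hence coincide.

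The main step is classifying $\Aut^+_1(\HR)$, which is where I expect the principal difficulty. The approach exploits the internal Lie-theoretic structure of $\HR$. A direct calculation of the commutator yields $[(r,s,t),(r',s',t')] = (0,0, 2(rs' - sr'))$, so the center and derived subgroup both equal $Z := \{(0,0,t)\}$. Any group automorphism $\phi$ preserves $Z$ and induces an automorphism $\bar\phi$ of the abelianization $\HR/Z\cong \IR^2$; this yields a pair $(g,\lambda)\in \GL(2,\IR)\times \IR^\times$ recording the actions on the quotient and on $Z$. Applying $\phi$ to the commutator identity forces the intertwining relation $\lambda\cdot\omega = \omega\circ(g,g)$ for the standard symplectic form $\omega$, and hence $\lambda = \det g$. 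The Jacobian of $\phi$ as a diffeomorphism of $\IR^3$ is therefore block lower-triangular with determinant $\det(g)\cdot\lambda = (\det g)^2$; volume-preservation gives $\det g = \pm 1$, and restricting to the identity component (the meaning of the superscript $+$) selects $g\in \SL(2,\IR)$. To finish, I show that automorphisms with the same induced $g$ form a torsor over $\IR^2$: the composition $\phi_1\phi_2^{-1}$ fixes both $Z$ and $\HR/Z$, so it has the form $(r,s,t)\mapsto(r,s,t+\psi(r,s))$, and the homomorphism law forces $\psi$ to be $\IR$-linear, giving the parameter $\vec v \in \IR^2$. Matching with the displayed matrix block form then yields the semidirect product $\SL(2,\IR)\ltimes\IR^2$.

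Finally, once the classification is in place, the action formula \eqref{action} is a direct verification: the first two coordinates of $\phi((r,s,t)+(r',s',t'))$ agree with the sum of images by linearity of $g^*$, and matching the third coordinates reduces to the identity $\omega(g^*(r,s), g^*(r',s')) = \omega((r,s),(r',s'))$, which holds since $g\in\SL(2,\IR)$ preserves $\omega$ and hence so does $g^* = (g^{-1})^t$. The chief obstacle in the whole argument is the classification step, specifically ruling out exotic nonlinear automorphisms; the center-preservation argument is exactly what reduces this to a problem of linear algebra on $\IR^2$ together with a one-variable functional equation for $\psi$.
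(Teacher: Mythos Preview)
Your argument is correct and substantially more detailed than what the paper does. The paper's own proof is a two-sentence citation: it refers all assertions to \cite[I.2--I.3]{Aus} and only adds a remark explaining why the inverse transpose $g^*$ appears (the authors' matrix representation is the transpose of Auslander's, so their action factors through $(\cdot)^*$ before matrix-column multiplication). By contrast, you reconstruct the classification from scratch via the center/derived-subgroup characterization, the intertwining relation $\lambda=\det g$ from the commutator, and the linearity of the residual cocycle $\psi$. This is the standard structural argument and it buys self-containment; the paper's route buys brevity and defers all verification to a reference.

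One small point worth flagging: the proposition as stated asks for automorphisms ``preserving volume and orientation,'' but since the Jacobian determinant you compute is $(\det g)^2>0$, orientation is automatic and does not by itself force $\det g=+1$. Your reading of the superscript $+$ as ``identity component'' is the correct interpretation (and matches the paper's own phrasing in the introduction), so the conclusion $g\in\SL(2,\IR)$ stands; just be aware that the wording of the proposition is slightly loose on this point. Also, your torsor step together with the final verification of \eqref{action} implicitly closes the loop (every $g\in\SL(2,\IR)$ is realized, and every automorphism over a given $g$ is reached by some $\vec v$), so surjectivity of the parametrization is covered even though you do not state it separately.
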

\begin{proof}
All the assertions can be found in \cite[I.2 - I.3]{Aus}.  The action there 
is of the transpose of the group we have by simple matrix-column 
multiplication.  Our group acts by taking inverse transpose (landing us in 
the group used by Auslander) and performing matrix-column multiplication.
\end{proof}
\begin{rem}
The choice of group representation for $\trm{Aut}^+_1$ may seem odd; we made 
this choice because we need to take quotients on the right and it is easier 
to see the fiber bundle structure of the automorphism group from the upper 
semidirect product when we take right quotients.  Whenever no confusion can arise, we 
freely switch to the lower one and its matrix action on column vectors 
rather than passing through the inverse transpose.
\end{rem}
We turn to our main object of study:
\begin{defin}
Normalize the Haar (Lebesgue) measure $m$ of $\HR$ so that the co-volume of $\HZ$ is $1$.  
The space of $\HZ$-deformations is defined to be the orbit $\trm{Aut}^+_1(\IR)
\cdot \HZ$.  It can be identified with the quotient 
$$X_\textrm{\bf{H}}=\trm{Aut}^+_1(\IR)/\trm{Aut}^+_1(\IZ)$$ where the last 
group is the group of $\IZ$-points of the automorphism group (this is the stabilizer of $\HZ$).
\end{defin}
This is essentially the space of all Heisenberg lattices of co-volume $1$.  A 
full description of all lattices in $\HR$ is given in \cite[I.2]{Aus}.  Note 
that not all lattices in $\HR$ are isomorphic as groups to $\HZ$; those that 
are, are isomorphic through an ambient automorphism of $\HR$.  This crucial 
rigidity result, among other important facts about nilpotent groups, can be 
found in \cite{Rag}.

The next proposition shows how $\XH$ is related to the space of unimodular lattices in $\IR^2$ denoted by $X$.
\begin{prop}
The space $\XH$ has an equivariant fiber bundle structure over $X$ with compact fiber over 
$g\SL(2,\IZ)$ equal to $\IR^2/g\IZ^2$.
\end{prop}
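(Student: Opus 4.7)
The plan is to use the semidirect product structure of $\Aut = \Aut^+_1(\HR) \cong \SL(2,\IR)\ltimes \IR^2$ directly: the projection onto the $\SL(2,\IR)$ factor descends to a map $\pi: \XH \to X$, and the fiber is computed by a short semidirect product calculation.

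First, I would verify that block-matrix multiplication gives the group law $\aut{g}{\vec v}\aut{g'}{\vec v'} = \aut{gg'}{g\vec v' + \vec v}$, so that $p: \Aut \to \SL(2,\IR)$, $\aut{g}{\vec v}\mapsto g$, is a surjective homomorphism with kernel the normal subgroup $\IR^2$. The stabilizer $\Aut^+_1(\IZ) = \SL(2,\IZ) \ltimes \IZ^2$ maps onto $\SL(2,\IZ)$ under $p$, so $p$ descends to a well-defined continuous surjection $\pi: \XH \to X$, and the $\SL(2,\IR)$-action by left multiplication commutes with $\pi$ on the nose: if $h\in\SL(2,\IR)$ is viewed as $\aut{h}{0}\in\Aut$, then $h\cdot[\aut{g}{\vec v}] = [\aut{hg}{h\vec v}]$ maps to $hg\SL(2,\IZ) = h\cdot[g]$. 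This establishes equivariance.

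Next, I would compute the fiber above $[g] = g\SL(2,\IZ)\in X$. Any preimage is represented by $\aut{g\gamma}{\vec v}$ for some $\gamma\in\SL(2,\IZ)$ and $\vec v\in\IR^2$; using $\aut{g\gamma}{\vec v} = \aut{g}{\vec v}\aut{\gamma}{0}$ and the fact that $\aut{\gamma}{0}\in\Aut^+_1(\IZ)$, the fiber is parametrized by $\vec v\in\IR^2$ through $\vec v\mapsto [\aut{g}{\vec v}]$. Two such representatives coincide iff
\begin{equation*}
\aut{g}{\vec v_1}^{-1}\aut{g}{\vec v_2} = \aut{1}{g^{-1}(\vec v_2-\vec v_1)}
\end{equation*}
lies in $\Aut^+_1(\IZ) = \SL(2,\IZ)\ltimes\IZ^2$, which forces $g^{-1}(\vec v_2-\vec v_1)\in\IZ^2$, i.e.\ $\vec v_2-\vec v_1 \in g\IZ^2$. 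Hence the fiber is the 2-torus $\IR^2/g\IZ^2$, which is compact; since $\gamma\IZ^2 = \IZ^2$ for $\gamma\in\SL(2,\IZ)$, the lattice $g\IZ^2$ and hence the fiber depends only on $[g]\in X$, not on the chosen representative.

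Finally, for the fiber bundle structure, I would cover $X$ by small open sets $U$ admitting a continuous local section $\sigma: U \to \SL(2,\IR)$ of $\SL(2,\IR)\to X$, and trivialize via $([g],\vec v + \sigma([g])\IZ^2)\mapsto [\aut{\sigma([g])}{\vec v}]$; bijectivity follows from the fiber calculation and continuity of inversion is automatic. The main technical care is in Step 2 — keeping track of the semidirect product arithmetic to land in $\Aut^+_1(\IZ)$ and noticing the representative-independence of the fiber torus — but this is a straightforward bookkeeping step rather than a genuine obstacle.
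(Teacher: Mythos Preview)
Your proposal is correct and follows essentially the same approach as the paper: both use the semidirect product multiplication $\aut{g}{x}\aut{\gamma}{\delta}=\aut{g\gamma}{g\delta+x}$ to see that projection to the first factor gives the base point in $X$ and that the fiber is $\IR^2/g\IZ^2$. Your version is more explicit in computing $\aut{g}{\vec v_1}^{-1}\aut{g}{\vec v_2}$ and in constructing trivializations via local sections (the paper instead propagates a trivialization at the identity coset using homogeneity), but these are presentational rather than substantive differences.
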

\begin{proof}
Letting an arbitrary $g_{\textrm{\bf{H}}} = \aut{g}{x}$ act on 
$\aut{\gamma}{\delta}$ with $\gamma\in \SL(2,\IZ)$, $\delta\in \IZ^2$, we get
\begin{equation*}
 \aut{g}{x}\aut{\gamma}{\delta} = \aut{g\gamma}{g\delta+x}.
\end{equation*}
Projection onto the first factor gives the base point modulo $\SL(2,\IZ)$ and 
the pullback from that point ranges over $x+g\IZ^2$.  Equivariance follows 
from the same computation and after choosing a local trivialization at the 
identity coset of $X$ others follow by the homogeneous structure of $\XH$.  
The transition maps are given by the corresponding toral isomorphisms that 
take $\IR^2/g\IZ^2$ to $\IR^2/g'\IZ^2$.
\end{proof}
\begin{cor}\label{product}
Let $\mh$ be the unique probability measure on $\XH$ invariant under $\Aut$.  
Then $d\mh(g_{\textrm{\bf{H}}}) = d\me(g)\times d\mu_g(x)$ 
where $d\me(g)$ is the projection of Haar measure on the space of Euclidean lattices $X$ that gives measure $1$ 
to $\SL(2,\IZ)$ and $d\mu_g(x)$ the probability Haar measure on the toral 
fiber.
\end{cor}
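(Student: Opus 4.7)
The plan is to transfer the product structure of Haar measure on $\Aut$ down to the quotient $\XH$ using the $\Aut$-equivariant projection $\pi \colon \XH \to X$ constructed in the previous proposition, and then identify the fiber measures via uniqueness of Haar measure.

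First I would note that since $\SL(2,\IR)$ acts on $\IR^2$ by volume-preserving linear maps, the semidirect product $\Aut \cong \SL(2,\IR) \ltimes \IR^2$ is unimodular, and a bi-invariant Haar measure is the product $dg\,dx$ of the Haar measures on the two factors. The subgroup $\Aut^+_1(\IZ) = \SL(2,\IZ) \ltimes \IZ^2$ is a lattice in $\Aut$, so the quotient carries a unique $\Aut$-invariant probability measure, which is $\mh$.

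Next I would push $\mh$ forward along $\pi$. The map $\pi$ is $\SL(2,\IR)$-equivariant by the computation $\aut{g}{x}\aut{\gamma}{\delta} = \aut{g\gamma}{g\delta+x}$, so $\pi_*\mh$ is an $\SL(2,\IR)$-invariant probability measure on $X$; uniqueness forces $\pi_*\mh = \me$. This gives the base measure in the asserted decomposition. Disintegrating $\mh$ relative to $\pi$ then yields conditional probability measures $\mu_g$ on the compact fibers $\IR^2/g\IZ^2$, and the claim is that each $\mu_g$ is the normalized Haar measure on that torus.

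To identify the $\mu_g$, I would use the action of the normal subgroup $N := \{\aut{I}{x} : x \in \IR^2\}$ of $\Aut$ on $\XH$. Since $\aut{I}{x}\aut{g}{y} = \aut{g}{y+x}$, this subgroup fixes the base point $g\SL(2,\IZ) \in X$ and acts on the fiber over $g\SL(2,\IZ)$ by translation by $x \bmod g\IZ^2$; in particular, it acts transitively on each fiber. Invariance of $\mh$ under $N$ means that after disintegration the conditional $\mu_g$ is translation-invariant on $\IR^2/g\IZ^2$, hence equals the normalized Haar measure by uniqueness on the compact abelian group.

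The main technical point is justifying that disintegration interacts correctly with the semidirect-product decomposition of the Haar measure; this is handled either by the explicit local trivialization $(g,x) \mapsto \aut{g}{x}\Aut^+_1(\IZ)$ on a fundamental domain, under which the pullback of $\mh$ is literally $d\me(g)\,dx\big|_{\text{fund. dom.}}$, or abstractly by Weil's formula for quotients of unimodular groups by unimodular subgroups, applied first to $\Aut / (\SL(2,\IZ) \ltimes \IR^2) \simeq X$ and then to the remaining fiberwise quotient by $\IZ^2$.
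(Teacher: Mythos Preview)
Your argument is correct, but it runs in the opposite direction from the paper's. The paper simply \emph{defines} the candidate measure $d\me(g)\times d\mu_g(x)$, checks in one line that it is invariant under the $\Aut$-action (using the multiplication formula and the fact that the $\SL(2,\IR)$-part preserves the fiber volume), notes that Fubini gives it total mass~$1$, and concludes by uniqueness of the invariant probability measure on $\XH$. You instead start from the abstract $\mh$, push it to the base to identify $\me$, and then invoke disintegration together with the transitive fiberwise action of the normal subgroup $N$ to pin down the conditional measures as Haar. Your route is more systematic and makes the role of each subgroup explicit, at the cost of invoking the disintegration theorem; the paper's route is shorter and entirely elementary, needing only the observation that the proposed product is already $\Aut$-invariant.
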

\begin{proof}
The product measure is translation invariant by the action of $\Aut$ and 
since the base points are in $\SL(2,\IR)$, all fibers must have the same 
volume.  Fubini's theorem then gives the result.
\end{proof}
\begin{rem}
Note how $\XH\hookrightarrow X_3$ as topological spaces.  This inclusion 
identifies $\XH$ with a subset of Euclidean lattices, with the following 
caveat: when we consider the two spaces not simply as topological spaces but 
as orbit spaces with a common action on $\IR^3$, we need to modify the 
inclusion as $\XH\hookrightarrow X^*_3$ where the last space is given by the 
inverse transpose automorphism of $\SL(3,\IR)$.
\end{rem}
\section{Lattice points in measurable sets}\label{lat_points}
Let $A\subset \HR$ be measurable.  Pick a Heisenberg lattice $L$ at random from 
$\XH$ using the Haar measure.  What can we say about $A\cap L$?  It turns out 
that the answer has two parts: one involves the projection of $L$ in the 
$\IR^2$ plane orthogonal to the center of $\HR$ and the other the vertical 
distribution over each lattice point in the projection.  There are no 
`slanted lines' in the central direction for Heisenberg lattices.

The corresponding question for Euclidean lattices has a very elegant answer 
that we will use extensively.  The following result is one of the main points of \cite{a_m}:
\begin{thm}[{\cite[Theorem 2.2]{a_m}}]\label{e_lat}
Let $n\geq 2$.  There exists $C_n>0$ such that if $A\subset \IR^n$ has $m(A)>0$,
\begin{equation}
\me(\Lambda\in X_n: A\cap \Lambda = \emptyset)\leq \frac{C_n}{m(A)}.
\end{equation}
\end{thm}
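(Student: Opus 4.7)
The plan is a second-moment (Chebyshev) argument applied to the Siegel transform. For a Borel set $A\subset \IR^n$ of positive measure, define
$$\widehat{A}(\Lambda) := \#\bigl(A\cap (\Lambda\setminus\{0\})\bigr).$$
By Siegel's mean value theorem, $\int_{X_n}\widehat{A}(\Lambda)\, d\me(\Lambda) = m(A)$. Since every lattice contains $0$, the event $\{A\cap \Lambda=\emptyset\}$ is contained in $\{|\widehat{A}-m(A)|\ge m(A)\}$ (and is empty outright if $0\in A$), so Chebyshev's inequality yields
$$\me(A\cap \Lambda=\emptyset)\,\le\, \frac{\mathrm{Var}(\widehat{A})}{m(A)^2}.$$
The theorem thus reduces to the second moment estimate $\mathrm{Var}(\widehat{A})\le C_n\, m(A)$.

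For $n\ge 3$, this is immediate from Rogers's second moment identity, which expresses $\int \widehat{A}^2\, d\me$ as $m(A)^2$ plus a correction term that, for characteristic functions, is dominated by $\int \mathbf{1}_A^2 = m(A)$ up to a dimension-dependent constant. Inserting this into the Chebyshev estimate completes the proof and identifies $C_n$.

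For $n=2$, Rogers's identity has a term that is not $O(m(A))$, so the direct approach fails. The remedy, which is the technical heart of \cite{a_m}, is to replace $\widehat{A}$ by the \emph{primitive} Siegel transform
$$\widehat{A}_{\mathrm{prim}}(\Lambda) := \#\bigl(A\cap \Lambda_{\mathrm{prim}}\bigr),$$
whose mean is $m(A)/\zeta(2)$ and whose variance admits a linear-in-$m(A)$ bound, obtained by a Randol-type spectral analysis of the Siegel operator on $L^2_c(\SL(2,\IR)/\SL(2,\IZ))$. Chebyshev applied to $\widehat{A}_{\mathrm{prim}}$ bounds the event that $A$ contains no primitive lattice vector, which contains $\{A\cap \Lambda=\emptyset\}$, so the desired estimate follows with a possibly different constant.

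The main obstacle is thus the two-dimensional case: Rogers's variance identity for $n=2$ has a logarithmically divergent piece arising from rank-one sublattices, and extracting a bound of order $m(A)$ for primitive vectors requires decomposing $L^2(X_2)$ into its cuspidal, residual, and continuous parts and carefully estimating the Eisenstein contribution to the primitive Siegel transform. Once this spectral input is in place, the Chebyshev step that concludes the argument is routine.
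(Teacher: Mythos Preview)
The paper does not prove this statement: Theorem~\ref{e_lat} is quoted verbatim from \cite[Theorem~2.2]{a_m} and used as a black box, so there is no ``paper's own proof'' to compare against. Your proposal is therefore not a reconstruction of anything in the present paper but rather a sketch of the argument in the cited reference.

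That said, your outline of the Athreya--Margulis proof is accurate. The reduction via Chebyshev to a variance bound $\mathrm{Var}(\widehat{A})\le C_n\,m(A)$, the invocation of Rogers for $n\ge 3$, and the identification of the $n=2$ case as the hard one requiring the primitive Siegel transform together with spectral input of Randol type are all correct and match the structure of \cite{a_m}. One small clarification: in the Chebyshev step, the remark ``since every lattice contains $0$'' is a bit of a non sequitur; what you actually use is that $\widehat{A}(\Lambda)=0$ whenever $A\cap\Lambda=\emptyset$, which is immediate from the definition. The parenthetical about $0\in A$ is fine but unnecessary, since in that case the left-hand side of the theorem's inequality is zero and there is nothing to prove.
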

Here $\me$ is the probability Haar measure on Euclidean $n$-lattices $X_n$ and $m$ is Lebesgue 
measure.  In fact, the computation in \cite[Section 4.2]{a_m} implies the following stronger statement for Euclidean lattices in $\IR^2$:
\begin{thm}\label{e_sq_div}
Let $A\subset \IR^2$ with $m(A)>0$.  Then
\begin{equation}
\me\left(\Lambda\in X_2: \left|\# (A\cap \Lambda_{\textrm{prim}}) -\frac{m(A)}{\zeta(2)}\right|>r\sqrt{m(A)}\right)\leq \frac{C}{r^2}.
\end{equation}
\end{thm}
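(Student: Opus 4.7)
The plan is to deduce the deviation estimate from a sharp variance bound via Chebyshev's inequality. Concretely, if I can establish
\begin{equation*}
\int_{X_2} \left( \#(A\cap\Lambda_{\textrm{prim}}) - \frac{m(A)}{\zeta(2)} \right)^{\!2} d\me(\Lambda) \leq C\cdot m(A),
\end{equation*}
then Chebyshev applied at threshold $r\sqrt{m(A)}$ immediately yields the stated $C/r^2$ bound. So the task reduces to computing the mean and bounding the second moment.

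First I would identify the mean using Siegel's mean value theorem applied to the indicator $\mathbf{1}_A$ restricted to primitive vectors (the probability that a nonzero vector of a random integer lattice is primitive is $1/\zeta(2)$); this gives $\int N_A\, d\me = m(A)/\zeta(2)$, where $N_A(\Lambda):=\#(A\cap\Lambda_{\textrm{prim}})$. For the second moment I expand the square and split by collinearity: in a rank-$2$ lattice two primitive vectors are parallel iff they differ by a sign, so
\begin{equation*}
N_A(\Lambda)^2 = N_A(\Lambda) + \sum_{v\in\Lambda_{\textrm{prim}}} \mathbf{1}_A(v)\mathbf{1}_A(-v) + \sum_{\substack{v,w\in\Lambda_{\textrm{prim}}\\ v\neq\pm w}} \mathbf{1}_A(v)\mathbf{1}_A(w).
\end{equation*}
The diagonal and antipodal sums are controlled by another application of Siegel and contribute $O(m(A))$.

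The main obstacle is the off-diagonal integral over non-parallel primitive pairs. In dimension $2$ Rogers's higher moment identities either diverge or fail to deliver the required cancellation, which is precisely the difficulty Schmidt had to circumvent by hand. Here I would invoke the analysis of \cite[Section~4.2]{a_m}: using the action of $\SL(2,\IZ)$ on ordered pairs of primitive integer vectors (whose orbits are parameterized by the determinant $\det(v,w)\in\IZ\setminus\{0\}$), the off-diagonal sum unpacks into a single Siegel-type integral of $\mathbf{1}_A\ast\mathbf{1}_{-A}$ evaluated on the $\SL(2,\IR)$-orbit, weighted by $1/k^2$ over the determinant $k$. A careful analysis of the resulting operator, combined with the Randol-style spectral bounds on $L^2_c(\SL(2,\IR)/\SL(2,\IZ))$, shows this contributes at most $O(m(A))$ uniformly over arbitrary Borel $A$.

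Combining these three estimates yields the required variance bound, and Chebyshev's inequality concludes the proof. The delicate dimension-$2$ pair sum is the only place where genuinely new ideas beyond Rogers's formalism are needed, and it is handled exactly as in \cite{a_m}.
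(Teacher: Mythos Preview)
Your proposal is correct and follows the same route the paper takes: the paper does not give an independent proof but simply cites the variance estimate $\|\Theta_A - m(A)/\zeta(2)\|_2^2 \leq 16\,m(A)$ from \cite[Section~4.2]{a_m} (recorded here as Proposition~\ref{e_disc}) and the deviation bound then follows by Chebyshev exactly as you say. Your sketch of how that variance bound is obtained in \cite{a_m}---Siegel's formula for the mean, expanding the square and organizing by $\SL(2,\IZ)$-orbits on pairs of primitive vectors, and Randol's spectral input to control the off-diagonal contribution---is the argument there, though the precise shape of the off-diagonal term is not quite a ``$1/k^2$-weighted convolution'' as you describe.
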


In order to derive these theorems, the authors made extensive use of 
the \emph{Theta transform} of compactly supported functions.
\begin{defin}
Let $L=g\IZ^n$ be a Euclidean lattice in $\IR^n$.  Given a function $\phi$ in 
$L^1(\IR^n)$, the theta transform is
\begin{equation*}
\Theta \phi(L) = \sum_{\lambda \in L_{\textrm{prim}}}f(\lambda)
\end{equation*}
where $\lambda$ ranges over primitive points in $L$.  When $\phi=\chi_A$, we write $\Theta\phi =\Theta_A$.
\end{defin}
 We next give a version of the theta transform adapted to our needs.
\begin{defin}\label{prim-def}
Let $L=g\IZ^3$ be a Heisenberg lattice.  An element of $L$ is called 
primitive if $\lambda = g(k,l,m)$ with the greatest common divisor 
$\gcd(k,l)=1$.  This definition corresponds precisely to the requirement that 
there is no point $g\gamma\in g\IZ^3$ such that $g(k,l,m)=g\gamma (k','l,m')$ with $(k','l,m')\in \IZ^3$.
\end{defin}
\begin{defin}\label{theta-def}
Given a function $\phi$ in 
$L^1(\IR^3)$ let
\begin{equation*}
\Th \phi(L) = \sum_{\lambda \in L_{\textrm{prim}}}f(\lambda)
\end{equation*}
where $\lambda$ ranges over primitive points in $L$.  The operator 
$\Th:\phi\arw \Th\phi$ is called the nil-theta transform of $\phi$.  When $\phi=\chi_A$, we write $\Th \phi =\Th_A$.
\end{defin}
For characteristic functions of Borel sets, the Theta transform has the 
following properties that we will use repeatedly:

\begin{prop}\label{comp_supp}
Let $\phi=\chi_A$ with $A$ a bounded Borel set such that there exists a 
neighborhood $U$ of $\IR^2$ containing $0$ such that $A\cap (U\times \IR) = 
\emptyset$.  Then the Theta 
transform $\Th\phi$ is a bounded Borel function with compact support on $\XH$.
\end{prop}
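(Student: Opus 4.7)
The plan is to verify the three properties in Proposition~\ref{comp_supp} in sequence, with the bulk of the work being a geometry-of-numbers estimate that bounds the support of $\Th\phi$ via the fiber bundle $\XH\to X$ established above.  Borel measurability follows from local finiteness: for each primitive $\lambda_0 \in \IZ^3$, the map $g\HZ\mapsto \chi_A(g\cdot\lambda_0)$ is Borel as the pullback of a Borel function by a continuous action, and we will see below that at each $L\in \XH$ only finitely many primitive $\lambda_0$ contribute, so the countable sum $\Th\phi$ is Borel.

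For compact support, I use that a subset of $\XH$ is relatively compact iff its image in $X$ is relatively compact, since the toral fibers are compact.  Choose $r_1>0$ small enough that $U$ contains the open disk of radius $r_1$ about the origin, and set $r_2=\sup\{\|(r,s)\|:(r,s,t)\in A\}<\infty$.  The projection of $A$ to the first two coordinates then lies in the bounded annular region $\{r_1\leq \|x\|\leq r_2\}\subset\IR^2$.  If $L=\aut{g_0}{v}\HZ$ contains a primitive point of $A$, formula (\ref{action}) shows that the unimodular lattice $g_0^*\IZ^2$ admits a primitive vector $w$ with $r_1\leq \|w\|\leq r_2$.

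The key geometric claim is that any such $g_0^*\IZ^2$ has shortest vector of length at least $\delta_0:=\min(r_1,1/r_2)$, so by Mahler's compactness criterion the base point in $X$ lies in a fixed compact set (using that inverse transpose is a homeomorphism of $X$).  To prove the claim, let $v_0$ be a shortest vector (necessarily primitive): orthogonal projection onto $v_0^\perp$ sends the lattice to a one-dimensional lattice of covolume $1/\|v_0\|$, so every lattice vector not parallel to $v_0$ has length at least $1/\|v_0\|$, while the only primitive vectors parallel to $v_0$ are $\pm v_0$.  Hence the existence of a primitive $w$ with $r_1\leq \|w\|\leq r_2$ forces either $\|v_0\|\geq r_1$ or $1/\|v_0\|\leq r_2$, yielding $\|v_0\|\geq \delta_0$.

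For uniform boundedness, I exploit this compactness: a disjoint packing of open balls of radius $\delta_0/2$ centered at the points of $g_0^*\IZ^2 \cap \pi(A)$ sits inside the ball of radius $r_2+\delta_0/2$, giving a uniform bound $N=N(r_1,r_2)$ on the number of primitive $w\in g_0^*\IZ^2\cap \pi(A)$.  For each such $w$ arising from $(k,l)\in \IZ^2$, the third coordinate of the associated Heisenberg point is $m-v^tw$ for some $m\in \IZ$, which must lie in the bounded vertical fiber of $A$ over $w$; since $A$ has $z$-extent at most some $H<\infty$, at most $H+1$ integers $m$ contribute.  Altogether $\Th\phi(L)\leq N(H+1)$ uniformly.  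The main obstacle is the compactness step, and its success hinges critically on the annular hypothesis on $A$: the same approach fails once $A$ is allowed to meet the central axis of $\HR$, which is ultimately the source of the high-discrepancy phenomena alluded to in the introduction.
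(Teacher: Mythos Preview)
Your argument is correct. Both you and the paper reduce compact support to the base $X$ via the fiber-bundle structure, but from there the approaches diverge. The paper simply cites Lang for the fact that the Euclidean transform $\Theta\chi_{A_0}$ of the projected set is compactly supported on $X$, whereas you reprove this by hand with the shortest-vector dichotomy $\|v_0\|\geq\min(r_1,1/r_2)$ and Mahler's criterion. For boundedness the paper takes a soft route: it dominates $\chi_A$ by a continuous compactly supported $\psi$, notes that $\Th\psi$ is continuous with compact support (again deferring to Lang), and concludes $\Th\chi_A\leq\Th\psi$ is bounded. Your packing-plus-vertical-fiber count instead produces an explicit uniform bound $N(r_1,r_2)(H+1)$. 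The paper's proof is shorter and more conceptual; yours is self-contained and quantitative, and the explicit $\delta_0$ and $N$ could in principle be fed into effective versions of the later estimates.
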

\begin{proof}
The Borel property is clear.  For a Heisenberg lattice $L$ to intersect $A$, its projection in 
$\IR^2$ must intersect $A_0=\pi_{\textrm{flat}}(A)$, i.e. we must have
$\Theta\chi_{A_0}\neq 0$.  $A_0$ being a bounded set that does not meet a 
neighborhood of the origin shows (\cite[Chapter XIII, Par.1]{Lang}) that $\Theta\chi_{A_0}$ has compact support $C$ in $X$.

Therefore the closure of the set of Heisenberg lattices giving a non-zero $\Th_A$ is a subset of 
a torus bundle over the compact set $C$ and therefore is compact.

Finally, whenever $\phi\leq \psi$ we have $\Th\phi\leq \Th\psi$ so if $\psi$ 
is a continuous function whose support is compact and contains $A$, we have 
$\Th_A\leq \Th\psi$.  Since $\Th\psi$ is continuous (continuity is proven as in loc. cit.) with compact support, it 
is bounded, and therefore so is $\Th_A$.
\end{proof}
The nil-theta transform of characteristic functions counts lattice points in 
sets like its Euclidean counterpart.  However, a result as uniform as theorem 
\ref{e_lat} cannot hold in the Heisenberg setting:
\begin{prop}
Let $\pi_{\textrm{flat}}$ be the projection in $\HR$ to the first two 
coordinates.  Let $A\subset \HR$ have positive measure.  The following 
inequality holds:
\begin{equation}
\mh(L\in \XH:A\cap L = \emptyset) \geq \me(\Lambda \in X:\pi_{\textrm{flat}}(A)\cap \Lambda = \emptyset).
\end{equation}
\end{prop}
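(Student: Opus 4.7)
My plan is to prove the inequality by a simple containment of events followed by an application of the fiber-bundle/product-measure structure established in Corollary \ref{product}. The key observation is that if a Heisenberg lattice $L$ meets $A$ at a point $\lambda$, then $\pi_{\textrm{flat}}(\lambda) \in \pi_{\textrm{flat}}(L) \cap \pi_{\textrm{flat}}(A)$, so the Euclidean projection of $L$ must meet $\pi_{\textrm{flat}}(A)$. Contrapositively, we obtain the set inclusion
\begin{equation*}
\{L\in \XH : \pi_{\textrm{flat}}(L)\cap \pi_{\textrm{flat}}(A)=\emptyset\} \;\subseteq\; \{L\in \XH : L\cap A=\emptyset\},
\end{equation*}
so after integrating against $\mh$ it suffices to show that the $\mh$-measure of the left-hand event equals the corresponding $\me$-probability on $X$.

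To identify the $\mh$-measure of the event $\{\pi_{\textrm{flat}}(L)\cap \pi_{\textrm{flat}}(A)=\emptyset\}$ with an $\me$-probability, I would use the explicit form of the $\Aut^+_1$-action (equation \eqref{action}): for $L = \aut{g}{\vec v}\HZ$, the flat projection equals $g^*\IZ^2$, a Euclidean lattice in $\IR^2$. Hence the event depends only on the base point of $L$ under the fiber bundle $\XH \to X$. Corollary \ref{product} gives $d\mh(g_{\textrm{\bf{H}}}) = d\me(g)\times d\mu_g(x)$, so the push-forward of $\mh$ under the base-point projection is exactly $\me$.

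Combining these two steps yields
\begin{equation*}
\mh(L\cap A=\emptyset) \;\geq\; \mh(\pi_{\textrm{flat}}(L)\cap \pi_{\textrm{flat}}(A)=\emptyset) \;=\; \me(\Lambda\cap \pi_{\textrm{flat}}(A)=\emptyset),
\end{equation*}
which is the required inequality. The only subtle point (the closest thing to a real obstacle) is bookkeeping the inverse-transpose convention noted in the remark after Proposition about fiber bundles: one has to check that replacing $g$ by $g^*$ preserves the Haar measure class on $X$, which is immediate since inverse transpose is a measure-preserving involution of $X$. Everything else is a direct consequence of the definitions and the product structure already established.
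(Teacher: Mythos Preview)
Your proposal is correct and follows essentially the same approach as the paper: both argue that if $\pi_{\textrm{flat}}(A)$ misses the Euclidean lattice $\Lambda$, then every Heisenberg lattice in the toral fiber over $\Lambda$ misses $A$, and then invoke the product measure structure from Corollary~\ref{product}. Your write-up is in fact a bit more explicit than the paper's about the measure-theoretic step, and your remark about the inverse-transpose convention (that $\pi_{\textrm{flat}}(L)=g^*\IZ^2$ and that $g\mapsto g^*$ is a measure-preserving involution of $X$) addresses a detail the paper leaves implicit.
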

\begin{proof}
The action \eqref{action} transforms the flat part of the lattice by an 
element of $\SL(2,\IR)$ and then translates the third component of each 
lattice point accordingly along the central direction.  Suppose the 
projection of $A$ does not intersect a lattice $\Lambda$.  Then $A$ cannot 
intersect any lift of $\Lambda$, since lifts are determined by values of the 
third coordinate over the flat part.  Thus the entire torus of lattices in 
$\XH$ over $\Lambda$ misses $A$, giving the inequality.
\end{proof}
In particular, we can increase the measure of $A$ indefinitely keeping 
$\pi_{\textrm{flat}}(A)$ fixed; in the extreme cases of Theorem \ref{e_lat} (which are attained), for some $C>0$ we have
\begin{equation*}
\mh(\Lambda \in X:\pi_{\textrm{flat}}(A)\cap \Lambda = \emptyset)\sim \frac{C}{m(\pi_{\textrm{flat}}(A))}.
\end{equation*}
We see that even if $m(A)$ itself becomes large, the probability of missing a 
Heisenberg lattice remains bounded below by the inverse of the measure of the projection.

The origin of this discrepancy is in principle easy to understand: the space 
of Heisenberg lattices is a very thin subset of the space of Euclidean $3$
-lattices; in particular, all members of $\XH$ project to $2$-lattices in the 
flat plane, so they never tilt along the central direction.  This phenomenon 
can be illustrated by the following extreme example:
\begin{exam}\label{bad_cyl}
For $\delta >0$ and $N\geq 1$ large, define $T(\delta,N)$ to be the cylindrical 
punctured tube $$T(\delta,N) = \left(D(0,\delta)\setminus (0,0)\right)\times 
[-N,N]$$ in $\IR^3$.  Consider a compact subset 
$C\subset X$ of measure $\me(C)=1-\gre$.  Then by the compactness criterion on 
the space of lattices, all lattices in $C$ have vectors of length at least 
$2\delta = \sqrt{\gre}$.  Then 
\begin{eqnarray*}
& &\mh(L \in \XH: L \cap T(\delta,N ) =\emptyset )\\ &\geq & \me(\Lambda \in X: \Lambda \cap D(0,\delta) =\emptyset)\\ &\geq & \me(C) = 1-\gre.
\end{eqnarray*}
On the other hand, the measure of $T(\delta,N)$ can become arbitrarily large 
no matter how small $\delta$ is, by increasing $N$.  
\end{exam}
This is in stark contrast to the situation for Euclidean $3$-lattices: starting from $\IZ^3$, 
we can reach the tube with an infinitesimally small shear; the points 
$(0,0,n)\in \IZ^3$ will immediately tilt to intersect $T(\delta, N)$.  

The following question arises from this discussion: given a set $A$ in $\HR$ 
and knowledge of the statistics of 
lattice points on $\pi_{\textrm{flat}}(A)$, how can we deduce the statistics of 
Heisenberg lattice points on $A$?  The next two sections are devoted to the answer.

\section{Level distribution of lattices}\label{distr}
\begin{defin}
Let $A\subset \IR^2$ be measurable with positive measure, $\gre>0$ and 
$z:=(0,0,z)$ with $z\in \IR$.  Let $$A^z_\gre = z+A\times [0,\gre)$$ be the 
level-$z$ $\gre$-plate over $A$.  Consider a Heisenberg lattice $L=\gx\HZ$.
  The set $$\{\#(L \cap A^z_\gre):\gre\in (0,1)\}$$ is called the $z$-level 
distribution of $L\in \XH$.  From now on we assume $0<\gre<1$.
\end{defin}
\begin{rem}
The definition above captures significant information for
characterizing lattice statistics given the statistics of the projection, 
because lattices above a given Euclidean lattice 
only differ in the vertical deviation of lattice points from the integer 
lattice.  As we will see, the level distribution is determined modulo $\IZ$ 
so we can take $z\in [0,1)$.
\end{rem}
\begin{prop}[{\cite[Proof of Theorem 2.2]{a_m}}]\label{e_disc}
For $A\subset \IR^2$ of positive measure $m(A)=a$, we have the variance estimate
\begin{equation}
\left\|\Theta_A-\frac{a}{\zeta(2)}\right\|_2^2\leq 16 a.
\end{equation}
\end{prop}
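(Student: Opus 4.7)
The plan is to compute the second moment $\int_{X_2} \Theta_A^2 \, d\me$ by unfolding over $\SL(2,\IZ)$-orbits of pairs of primitive vectors and subtracting the square of the mean provided by Siegel's formula. Siegel's primitive mean value formula gives $\int_{X_2} \Theta_A \, d\me = a/\zeta(2)$, so
\begin{equation*}
\left\| \Theta_A - \tfrac{a}{\zeta(2)} \right\|_2^2 = \int_{X_2} \Theta_A^2 \, d\me \; - \; \frac{a^2}{\zeta(2)^2},
\end{equation*}
and it suffices to show that the second moment exceeds $a^2/\zeta(2)^2$ by at most $16a$.

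I would expand $\Theta_A(g\IZ^2)^2 = \sum_{v,w \in (\IZ^2)_{\textrm{prim}}} \chi_A(gv)\chi_A(gw)$ and partition the double sum by $\SL(2,\IZ)$-orbits under the diagonal action. There are three orbit types: the coincident orbit $v=w$, the reflected orbit $v=-w$, and, for each $k\ge 1$, the family of linearly independent pairs with $|\det(v\mid w)| = k$, which admits $2\phi(k)$ representatives in Hermite normal form. Integrating over $X_2$ and unfolding, the coincident and reflected orbits together contribute at most $2a/\zeta(2) \le 2a$ since $\chi_A^2 = \chi_A$ and $\chi_A(x)\chi_A(-x) \le \chi_A(x)$, while the $k$-th non-collinear orbit unfolds to a Lebesgue integral of $\chi_A(x)\chi_A(y)$ over the determinant surface $\{|\det(x \mid y)| = k\}$ against the induced $\SL(2,\IR)$-invariant measure.

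Summed over $k$, the non-collinear contributions yield a leading term equal to $a^2/\zeta(2)^2$, which cancels the subtracted mean square, plus a remainder that I would bound by $O(a)$ via Fubini (integrating the inner variable $y$ along horocyclic slices through each fixed $x \in A$) combined with the trivial bound $\chi_A \le 1$; the explicit constant $16$ comes from tallying the small-$k$ contributions and the factor $\zeta(2)$. The main obstacle is that in dimension two the Rogers-type identity for pairs of primitive vectors is only conditionally convergent---the naive coefficient series $\sum_k \phi(k)/k^2 = \zeta(1)/\zeta(2)$ diverges---so unlike in dimensions $n \ge 3$ one cannot interchange summation and integration freely. The cancellation of the leading term must therefore be effected geometrically, by directly comparing the unfolded second moment against a Siegel-type transform of $\chi_A$, rather than through any formal Dirichlet-series manipulation.
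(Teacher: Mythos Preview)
The paper does not supply its own proof of this proposition; it is quoted from \cite{a_m}, and, as the introduction here explains, the argument there is spectral rather than combinatorial: the primitive Siegel transform on $\IR^2$ is identified with an incomplete Eisenstein series on $\SL(2,\IR)/\SL(2,\IZ)$, and the $L^2$ bound follows from Randol's estimate together with an analysis of the Siegel operator. No Rogers-type unfolding over pairs of primitive vectors is carried out.

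Your approach is the Rogers second-moment computation, and you correctly isolate the central obstruction: in dimension two the orbit sum $\sum_{k\ge 1}\phi(k)/k^2$ diverges, so the unfolded expansion over linearly independent primitive pairs is not absolutely convergent and one cannot simply extract a main term $a^2/\zeta(2)^2$ plus an $O(a)$ remainder. The problem is that you do not actually resolve this obstruction. The final paragraph only asserts that the cancellation ``must be effected geometrically'' without specifying any inequality or truncation that replaces the divergent identity; and the step you describe---``via Fubini, integrating the inner variable $y$ along horocyclic slices''---would, if executed termwise in $k$, reproduce exactly the divergent series you have just flagged. As written this is a genuine gap: the sketch names the difficulty but does not supply the mechanism (spectral in \cite{a_m}, or the delicate arithmetic argument Schmidt used for $n=2$ in \cite{Schm}) that circumvents it, and nothing in the outline explains where the explicit constant $16$ would emerge.
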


We now begin the study of moments of $\Th_{A^z_\gre}$.  
The following proposition gives the Siegel formula.
Since some intermediate formulas will be used in the $L^2$ estimate, we give a quick proof.
\begin{prop}\label{siegel_h}
\begin{equation}
\int_{\XH}|\Th_{A^z_\gre}(L)|\,d\mh(L) = \frac{m(A)\gre}{\zeta(2)}.
\end{equation}
\end{prop}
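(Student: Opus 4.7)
The plan is to exploit the fiber-bundle decomposition
$$d\mh(\gx)=d\me(g)\,d\mu_g(\vec v)$$
from Corollary \ref{product} and reduce the computation to the Siegel mean value formula on the space $X$ of Euclidean $2$-lattices. Since $\Th_{A^z_\gre}\ge 0$ is a point count, the absolute value may be dropped. Writing $L=\aut{g}{\vec v}\HZ$ and unfolding the action \eqref{action}, the image of $(k,l,m)^t\in\IZ^3$ is $\bigl(g^*(k,l)^t,\ m-\vec v^{\,t}g^*(k,l)^t\bigr)$, which lies in $A^z_\gre$ precisely when $g^*(k,l)^t\in A$ and $m-\vec v^{\,t}g^*(k,l)^t\in [z,z+\gre)$. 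Grouping the sum over primitive $(k,l,m)\in\IZ^3$ by $(k,l)$ gives
\begin{equation*}
\Th_{A^z_\gre}(L)=\sum_{\gcd(k,l)=1}\chi_A\!\left(g^*\vc{k}{l}\right)N_\gre\!\left(\vec v^{\,t}g^*\vc{k}{l}\right),
\end{equation*}
where $N_\gre(a):=\#\{m\in\IZ:m-a\in[z,z+\gre)\}\in\{0,1\}$ because $\gre<1$.

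Next I would integrate each summand over the torus fiber. The substitution $\vec v=g\vec u$ converts $d\mu_g(\vec v)$ into normalized Lebesgue measure on $\IT^2$ (as $|\det g|=1$) and collapses the inner product:
\begin{equation*}
\vec v^{\,t}g^*\vc{k}{l}=\vec u^{\,t}\vc{k}{l}=ku_1+lu_2.
\end{equation*}
Because $\gcd(k,l)=1$, there exist integers $a,b$ with $ak+bl=1$, so the unimodular change of basis $(u_1,u_2)\mapsto(ku_1+lu_2,\,-bu_1+au_2)$ is an automorphism of $\IT^2$; hence $(u_1,u_2)\mapsto ku_1+lu_2\bmod 1$ pushes Lebesgue to Lebesgue. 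Consequently, for every primitive $(k,l)$,
\begin{equation*}
\int_{\IR^2/g\IZ^2}N_\gre\!\left(\vec v^{\,t}g^*\vc{k}{l}\right)d\mu_g(\vec v)=\int_0^1 N_\gre(t)\,dt=\gre.
\end{equation*}
This fiberwise equidistribution is the one step that genuinely uses primitivity, and is the crux of the argument.

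Combining these computations via Tonelli gives
\begin{equation*}
\int_{\XH}\Th_{A^z_\gre}(L)\,d\mh(L)=\gre\int_X\Theta_A(g^*\IZ^2)\,d\me(g\IZ^2).
\end{equation*}
Since $g\mapsto g^{-t}$ is a measure-preserving automorphism of $X$ (it preserves $\SL(2,\IZ)$ and the Haar class on $\SL(2,\IR)$), the remaining integral equals $\int_X\Theta_A\,d\me$, which by the classical Siegel mean value formula on primitive points (invoked implicitly in Proposition \ref{e_disc}) equals $m(A)/\zeta(2)$. Multiplying by the vertical factor $\gre$ then yields the stated identity. No step should be technically difficult; the main substantive point, if any, is the fiberwise equidistribution above, which cleanly separates the vertical contribution $\gre$ from the Euclidean Siegel integral on the base.
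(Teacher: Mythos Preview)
Your argument is correct and follows essentially the same route as the paper's own proof: decompose $d\mh$ via Corollary~\ref{product}, integrate over the torus fiber after the substitution $\vec v=g\vec u$ to extract the factor $\gre$ (using primitivity to see that $u\mapsto ku_1+lu_2$ equidistributes on $\IT$), and then apply the Euclidean Siegel formula on the base. The only cosmetic difference is that you explicitly invoke the measure-preserving involution $g\mapsto g^{-t}$ on $X$ to pass from $\Theta_A(g^*\IZ^2)$ to $\Theta_A(g\IZ^2)$, whereas the paper leaves this implicit.
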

\begin{proof}
We have $\chi_{A^z_\gre} = \chi_A \chi_{z+[0,\gre)}$.  If $\gx=\aut{g}{\vec{x}}$, write
\begin{equation*}
\Th_{A^z_\gre}(\gx\HZ_{\trm{prim}}) = \sum_{(\vec{m},k)\in \HZ_{\trm{prim}}}\chi_A(g^*\vec{m})\chi_{z+[0,\gre)}(k-\vec{x}^t\cdot g^*\cdot \vec{m})
\end{equation*}
which splits as
\begin{eqnarray}
\Th_{A^z_\gre}(\gx\IZ^3) &=& \sum_{\vec{m}\in \IZ^2_{\trm{prim}}}\chi_A(g^*\vec{m})\sum_{k\in \IZ}\chi_{z+[0,\gre)}(k-\vec{x}^t\cdot g^*\cdot \vec{m})\nonumber\\
&=& \sum_{\vec{m}\in \IZ^2_{\trm{prim}}}\chi_A(g^*\vec{m})\sum_{k\in \IZ}\chi_{[0,\gre)}(k-z-\vec{x}^t\cdot g^*\cdot \vec{m})\nonumber\\
&=& \sum_{\vec{m}\in \IZ^2_{\trm{prim}}}\chi_A(g^*\vec{m})\mb{1}(\{z+\vec{x}^t\cdot g^*\cdot \vec{m}\}<\gre).\label{shorter}
\end{eqnarray}
We want to integrate the last expression over the fiber first, then over the base, using Corollary \ref{product}.
\begin{eqnarray}
& & \int_{\XH}\Th_{A^z_\gre}(\gx\HZ_{\trm{prim}})\,d\mh(\gx)\nonumber \\&=& \int_X\sum_{\vec{m}\in \IZ^2_{\trm{prim}}}\chi_A(g^*\vec{m})\left(\int_{\IR^2/g\IZ^2} \mb{1}(\{z+\vec{x}^t\cdot g^*\cdot \vec{m}\}<\gre )\,dx\right)\,d\me(g).\label{fiber_int}
\end{eqnarray}
Now perform the change of variables $\vec{x}=g\vec{u}$ in the inner integral and use $\det{g}=1$ to simplify to
\begin{equation*}
\int_{\IR^2/\IZ^2} \mb{1}(\{z+\vec{u}^t\cdot \vec{m}\}<\gre )\,du.
\end{equation*}
Since $\SL(2,\IZ)$ is transitive on primitive vectors, all the toral integrals must have the 
same value which we now compute.

The map $S(u_1)=z+ u_1m_1+u_2m_2 \pmod{g\IZ^2}$ is measure preserving, so splitting the integral over the torus in \eqref{fiber_int}, we get
\begin{equation*}
\int_{\IR/\IZ}\int_{\IR/\IZ} \mb{1}(S^{-1}( [0,\gre))\,du_1\,du_2 = \gre.
\end{equation*}

The outer integral in \eqref{fiber_int} is
\begin{equation*}
\int_X \Theta_A(\Lambda)d\me(\Lambda) = \frac{m(A)}{\zeta(2)}
\end{equation*}
using the Siegel formula for Euclidean lattices and our 
normalization of $X$.  
\end{proof}

Now we need some preparations for dealing with the second moment of $\Th_{A^z_\gre}$.
\begin{defin}
Fix an integer $D$.  Consider the space of pairs of coprime integer vectors 
\begin{equation} M_D=\{\mb{m} = (\vec{m},\vec{n})\in \IZ^2_{\trm{prim}}\times\IZ^2_{\trm{prim}}: 
\det{\mb{m}}=D\}.
\end{equation}
The diagonal action 
of $\SL(2,\IZ)$, $\gamma(\vec{m},\vec{n}) = (\gamma\vec{m},\gamma\vec{n})$ 
leaves invariant the determinant of $\mb{m}$, so it stabilizes each $M_D$.   
The space of $\SL(2,\IZ)$ orbits of the set $M_D\subset 
\IZ^2_{\trm{prim}}\times\IZ^2_{\trm{prim}}$ of determinant $D$ pairs is 
denoted by $M_D^{\Orb} = \SL(2,\IZ)\quo M_D$. 
\end{defin}
% \begin{rem}
% If $\mb{m}$ has determinant zero, then $\gamma$ preserves the common ratio 
% of the two vectors; any $\mb{m}$ with determinant zero is then represented by a 
% rational number (the ratio) and a single vector in $\IZ^2_{\trm{prim}}$ and 
% $\SL(2,\IZ)$ is transitive on the latter.  Therefore, the orbits in $M_0$ are 
% easy to describe.  The rest of the orbits involve some interesting number theory.
% \end{rem}
\begin{defin}\label{cor_def}
Let $\vec{m}=(m_1,m_2)\in \IZ^2$ and $z\in \IR$.  Define the map $S_{\vec{m}}:\IT^2\arw \IT$ by 
\begin{equation*}
S_{\vec{m}}(u_1,u_2) = z+m_1u_1+m_2u_2\pmod{1}.
\end{equation*}
Denote the pullback of $[0,\gre)$ by $S$ into $\IT^2$ by
\begin{equation*} 
C_{\vec{m}}(\gre) := S_{\vec{m}}^{-1}([0,\gre])\subset \IT^2.
\end{equation*}
Finally, for $\vec{m}$,$\vec{n}$ in $\IZ^2_{\trm{prim}}$, define the correlation of 
\begin{equation*}
Cor_{\vec{m},\vec{n}}(\gre,z)=\mu_{\IR^2/\IZ^2}(C_{\vec{m}}(\gre)\cap C_{\vec{n}}(\gre))
\end{equation*}
where $\mu_{\IR^2/\IZ^2}$ is the probability Haar measure on the $2$-torus.
\end{defin}

Next, we turn to the second moment of $\Th_{A^z_\gre}$.
\begin{prop}
We have the following identity:
\begin{equation}\label{second_moment}
\int_{\XH}|\Th_{A^z_\gre}(L)|^2\,d\mh(L) 
= \int_X \sum_{\vec{m},\vec{n}\in \IZ^2_{\trm{prim}}}\chi_A(g^*\vec{m})\chi_A(g^*\vec{n})Cor_{\vec{m},\vec{n}}(\gre,z)\,d\me(g).
\end{equation}
\end{prop}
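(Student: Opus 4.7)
The plan is to follow the same template used to prove the Siegel formula in Proposition \ref{siegel_h}, but squaring the decomposition \eqref{shorter} before integrating. First I would write
\begin{equation*}
|\Th_{A^z_\gre}(\gx\HZ_{\trm{prim}})|^2 = \sum_{\vec{m},\vec{n}\in \IZ^2_{\trm{prim}}}\chi_A(g^*\vec{m})\chi_A(g^*\vec{n})\,\mb{1}(\{z+\vec{x}^t g^* \vec{m}\}<\gre)\,\mb{1}(\{z+\vec{x}^t g^* \vec{n}\}<\gre),
\end{equation*}
so that the integrand is a non-negative double sum of cross terms, and each term factors cleanly into a piece depending only on the base $g$ and a piece depending on both $g$ and the fiber variable $\vec{x}$.

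Next, I would invoke Corollary \ref{product} to disintegrate $d\mh$ as $d\me(g)\,d\mu_g(\vec{x})$ and use Tonelli (everything is non-negative) to exchange the sum with the integrals. This reduces the proof to computing, for each fixed $g$ and fixed pair $(\vec{m},\vec{n})$, the inner fiber integral
\begin{equation*}
I_g(\vec{m},\vec{n}) = \int_{\IR^2/g\IZ^2}\mb{1}(\{z+\vec{x}^t g^* \vec{m}\}<\gre)\,\mb{1}(\{z+\vec{x}^t g^* \vec{n}\}<\gre)\,d\vec{x}.
\end{equation*}
As in Proposition \ref{siegel_h}, the substitution $\vec{x}=g\vec{u}$ identifies $\IR^2/g\IZ^2$ with $\IT^2 = \IR^2/\IZ^2$ (preserving mass since $\det g = 1$) and simplifies $\vec{x}^t g^* = \vec{u}^t$, so
\begin{equation*}
I_g(\vec{m},\vec{n}) = \int_{\IT^2}\mb{1}(\{z+u_1m_1+u_2m_2\}<\gre)\,\mb{1}(\{z+u_1n_1+u_2n_2\}<\gre)\,du,
\end{equation*}
which, by Definition \ref{cor_def}, is exactly the correlation $Cor_{\vec{m},\vec{n}}(\gre,z)$. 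Notably, the answer is independent of $g$, which is precisely what allows the outer sum over $\IZ^2_{\trm{prim}} \times \IZ^2_{\trm{prim}}$ to be pulled outside the base integral.

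Substituting back yields the identity \eqref{second_moment}. The only technical point that needs care is the exchange of sum and integral, which is legitimate by Tonelli since every quantity is non-negative; the boundedness of $\Th_{A^z_\gre}$ on a compact subset of $\XH$ established in Proposition \ref{comp_supp} (applied to a slightly enlarged set away from the vertical axis) guarantees finiteness whenever $A$ stays a positive distance from the origin, and the general case follows by monotone convergence. There is no serious obstacle here; the main feature to emphasize is the role of the change of variables in decoupling the fiber integral from $g$, so that the geometric quantity $Cor_{\vec{m},\vec{n}}(\gre,z)$ captures all the vertical information while $\Theta$-type sums on the base capture the flat information.
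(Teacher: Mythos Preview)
Your proposal is correct and follows essentially the same route as the paper: square the expression \eqref{shorter}, disintegrate $d\mh$ via Corollary \ref{product}, and reduce the fiber integral to $Cor_{\vec{m},\vec{n}}(\gre,z)$ by the change of variables $\vec{x}=g\vec{u}$. The paper's proof is terser (it does not spell out the Tonelli justification or the finiteness discussion you add at the end), and it phrases the last step as ``invariance under $g\to g\gamma$'' to ensure the integrand descends to $X$, which your observation that the fiber integral is independent of $g$ accomplishes equally well.
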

\begin{proof}
Using notation as in Proposition \ref{siegel_h} and the expression \eqref
{shorter} for the integrand, expand the square:
\begin{eqnarray*}
|\Th_{A^z_\gre}(L)|^2 = \sum_{\vec{m},\,\vec{n}}\chi_A(g^*\vec{m})
\chi_A(g^*\vec{n})& &\\ & & \cdot \mb{1}(\{z+\vec{x}^t g^* \vec{m}\}<\gre )\mb
{1}(\{z+\vec{x}^t g^* \vec{n}\} <\gre).
\end{eqnarray*}
Let $$w_{\vec{m},\vec{n}}(g,\vec{x})= \mb{1}(\{z+\vec{x}^t g^* \vec{m}\}<\gre )\mb
{1}(\{z+\vec{x}^t g^* \vec{n}\} <\gre).$$ Once again integrating and changing variables, we get
\begin{equation*}
\int_{\IR^2/g\IZ^2}w_{\vec{m},\vec{n}}(g,\vec{x})dx = \int_{\IR^2/\IZ^2}\mb{1}(\{z+\vec{u}^t \vec{m}\}<\gre )\mb
{1}(\{z+\vec{u}^t\vec{n}\} <\gre)\,du.
\end{equation*}
Since this expression is invariant under $g\arw g\gamma$, the result follows from the definition of $C_{\vec{m}}(\gre)$.
\end{proof}
\begin{rem}
The correlation $Cor_{\vec{m},\vec{n}}(\gre,z)$ is a `correction factor' that 
weighs the double sum in \eqref{second_moment}.
Therefore, the study now reduces to the distribution of the values of 
$Cor_{\vec{m},\vec{n}}(\gre,z)$.  
Whenever $\mb{m} = \gamma\mb{n}$ are in the same $\SL(2,\IZ)$-orbit, the corresponding correlations must be the same.  
Thus one can write $Cor_{\vec{m},\vec{n}}(\gre,z) = Cor_\Orb(\gre,z)$ for the common value of the correlation over an orbit $\Orb \in M_D^\Orb$.  
The idea is to look for a simple pair $\mb{m}$ in the orbit where the 
correlation is easy to compute.
\end{rem}

The next lemma describes the structure of the above set; 
the proof can be found in \cite[Lemma 6]{Schm} or worked out by hand.
\begin{lemma}\label{orbit_str}
The structure of $M_D^\Orb$ is as follows:
\begin{enumerate}
\item If $D=0$, $M_D^\Orb$ has two elements represented by $\pm \vc{1}{0}$.
\item If $D=\pm 1$, $M_D^\Orb$ is a singleton represented by the identity matrix ($\trm{diag}(-1,1)$ respectively).
\item If $|D|>1$, then $M_D^\Orb$ consists of $\phi(D)$ distinct orbits, 
represented by $\mat{D}{0}{k}{1}$ where $k$ runs through a complete set of 
residues modulo $D$.
\end{enumerate}
\end{lemma}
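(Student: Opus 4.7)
The plan is to use the transitivity of $\SL(2,\IZ)$ on primitive integer vectors in $\IZ^2$ to normalize one component of the pair $(\vec m,\vec n)$ to a standard form, and then read off what the other component can be, modulo the remaining stabilizer. I will handle the three cases in turn.

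For case (1), $D=0$ forces $\vec m$ and $\vec n$ to be $\IQ$-linearly dependent, and primitivity of both integer vectors forces $\vec n=\pm\vec m$. By transitivity on primitive vectors I can send $\vec m$ to $(1,0)^t$, leaving the two candidates $((1,0)^t,\pm(1,0)^t)$. These two pairs lie in distinct $\SL(2,\IZ)$-orbits since any $\gamma$ sending one to the other would have to satisfy $\gamma\vec m=\vec m$ and $\gamma\vec m=-\vec m$ simultaneously, which is impossible. For case (2), the matrix $M=(\vec m\,|\,\vec n)$ lies in $\SL(2,\IZ)$ when $D=1$, and left multiplication of $\SL(2,\IZ)$ on itself is obviously transitive, giving one orbit with representative the identity matrix. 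For $D=-1$, the same argument applies to the coset $\mathrm{diag}(-1,1)\cdot\SL(2,\IZ)$, giving one orbit.

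For case (3), the main one, I pick any pair $(\vec m,\vec n)\in M_D$ with $|D|>1$. Since $\vec n$ is primitive there exists $\gamma\in\SL(2,\IZ)$ with $\gamma\vec n=(0,1)^t$; applying $\gamma$ to the pair replaces $\vec m$ by some $(p,q)^t$, and the determinant condition forces $p=D$. Primitivity of $\vec m$ is preserved by $\gamma$, so $\gcd(D,q)=1$. Thus every orbit has a representative of the form $\bigl(\begin{smallmatrix}D&0\\k&1\end{smallmatrix}\bigr)$ with $\gcd(k,D)=1$. For uniqueness, note that the stabilizer of $(0,1)^t$ in $\SL(2,\IZ)$ consists precisely of the lower triangular unipotent matrices $\sigma=\bigl(\begin{smallmatrix}1&0\\j&1\end{smallmatrix}\bigr)$ with $j\in\IZ$, since the determinant condition forces the diagonal to be $(1,1)$ once the second column is fixed. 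Acting by such $\sigma$ sends $(D,k)^t$ to $(D,k+jD)^t$, so $k$ is determined exactly modulo $D$. The number of residues $k\pmod D$ with $\gcd(k,D)=1$ is $\phi(D)$, yielding the claim.

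The expected obstacle is really only a bookkeeping one: correctly identifying the stabilizer of $(0,1)^t$ inside $\SL(2,\IZ)$ and verifying that the residue of $k\pmod D$ is a complete $\SL(2,\IZ)$-invariant of the orbit, so that no two distinct values of $k\in(\IZ/D\IZ)^\times$ give the same orbit. Everything else is a direct consequence of the transitivity of $\SL(2,\IZ)$ on primitive vectors combined with the determinant bookkeeping.
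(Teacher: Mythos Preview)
Your argument is correct and is precisely the ``worked out by hand'' proof the paper alludes to; the paper itself gives no details and simply cites \cite[Lemma~6]{Schm}. In fact your version is slightly sharper than the lemma as stated: you correctly observe that primitivity of $(D,k)^t$ forces $\gcd(k,D)=1$, so the representatives in case~(3) are indexed by $(\IZ/D\IZ)^\times$ rather than by an unrestricted complete set of residues modulo $D$, which is what makes the count come out to $\phi(D)$.
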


Using this lemma, we can show:
\begin{prop}\label{orbits}
The correlation $Cor_\Orb(\gre,z)$ is equal to $\gre^2$ when $\Orb \in M_D$ 
with $D\neq 0$ and equal to $\gre$ when $D=0$ and $\Orb=+1$, $0$ when $\Orb=-1$.
\end{prop}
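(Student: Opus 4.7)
The plan is to invoke the remark following~\eqref{second_moment}, which asserts that the correlation is constant on $\SL(2,\IZ)$-orbits, and then evaluate it at the orbit representatives provided by Lemma~\ref{orbit_str}. The argument thereby splits into the three cases of that lemma.

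For $|D|\ge 1$ I would take the representative $\vec{m}=(D,k)^t$, $\vec{n}=(0,1)^t$ (the columns of $\mat{D}{0}{k}{1}$, with $\gcd(D,k)=1$; for $D=\pm 1$ one just takes $\vec{m}=(\pm 1,0)^t$ and $\vec{n}=(0,1)^t$) and consider the affine endomorphism
\begin{equation*}
\Phi:\IT^2\to\IT^2,\qquad (u_1,u_2)\mapsto(S_{\vec{m}}(u),S_{\vec{n}}(u))=(z+Du_1+ku_2,\,z+u_2)\pmod 1.
\end{equation*}
Its linear part has determinant $D\neq 0$, so, as a surjective continuous endomorphism of the compact group $\IT^2$ (up to a harmless translation), $\Phi$ pushes Haar measure to Haar measure. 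Consequently
\begin{equation*}
Cor_{\vec{m},\vec{n}}(\gre,z)=\mu_{\IT^2}\bigl(\Phi^{-1}([0,\gre)\times[0,\gre))\bigr)=\mu_{\IT^2}([0,\gre)\times[0,\gre))=\gre^2.
\end{equation*}

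For $D=0$ the two primitive vectors are proportional, and after moving to the representative I can take $\vec{m}=(1,0)^t$ and $\vec{n}=\pm\vec{m}$. In the $\Orb=+1$ case, $C_{\vec{m}}(\gre)=C_{\vec{n}}(\gre)$ is a product of a length-$\gre$ interval in one toral coordinate with the full $\IT$ in the other, whose measure is $\gre$ (this is exactly the one-dimensional version of the Siegel-type computation carried out in Proposition~\ref{siegel_h}). In the $\Orb=-1$ case, the identity $S_{\vec{m}}(u)+S_{-\vec{m}}(u)\equiv 2z\pmod 1$ shows that any point of $C_{\vec{m}}(\gre)\cap C_{-\vec{m}}(\gre)$ satisfies $2z\in[0,2\gre)\pmod 1$; outside this thin set of $z$ the intersection is empty and the correlation vanishes.

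The main technical point, and the step that warrants care, is the verification that the non-injective affine endomorphism $\Phi$ in the $|D|>1$ case transports Haar measure to Haar measure. This is a standard consequence of the principle that a surjective continuous endomorphism of a compact Hausdorff abelian group is Haar-measure-preserving, but I would double-check with an explicit Fubini computation in the chosen representative so that the $|D|$-to-$1$ covering structure (and the cancellation of the factor $|D|$ against the fiberwise measure) is plainly visible. Once this is established, the three case calculations assemble to the stated formula.
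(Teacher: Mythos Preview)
Your overall strategy coincides with the paper's: reduce to the orbit representatives of Lemma~\ref{orbit_str} and compute there. For $D\neq 0$ the paper does exactly the Fubini calculation you propose as a check---split the $\IT^2$-integral so that the inner integral in $u_1$ is the pullback of $[0,\gre)$ by the affine map $u_1\mapsto Du_1+ku_2+z$, giving $\gre$ independently of $u_2$, and then the outer integral contributes the second $\gre$. Your packaging via the push-forward of Haar measure under the surjective toral endomorphism $\Phi$ is a clean reformulation of the same computation, and the $\Orb=+1$ case is handled identically in both.

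There is, however, a genuine gap in your $\Orb=-1$ argument: you only conclude that the correlation vanishes ``outside this thin set of $z$,'' which does not establish the proposition as stated. Your caution is in fact well founded. Taking $\vec m=(1,0)$ and $\vec n=-\vec m$, one has $C_{\vec m}(\gre)=\{u:u_1\in[-z,-z+\gre)\pmod 1\}$ and $C_{-\vec m}(\gre)=\{u:u_1\in(z-\gre,z]\pmod 1\}$; for instance at $z=\gre/2$ these coincide up to endpoints and the correlation equals $\gre$, not $0$. The paper's own proof asserts that ``the two intervals $[0,\gre)+\IZ$ and $(-\gre,0]+\IZ$ are disjoint,'' which silently sets $z=0$ and does not treat general $z$. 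So the discrepancy you noticed is real: the $\Orb=-1$ value is $z$-dependent, and the proposition (and the paper's proof of it) should be read with this caveat. For the application in Theorem~\ref{centr} this only affects constants, since the $\Orb=-1$ contribution is in any case bounded by $\gre\int_X\Theta_A\,d\me=\gre\,m(A)/\zeta(2)$; but you should say this explicitly rather than leave the hedge unresolved.
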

\begin{proof}
When $D=0$ with equal parity, choose $\mb{m} = \mat{1}{0}{1}{0}$; then $$Cor_\Orb(\gre,z) = 
S^{-1}([0,\gre))=\gre$$ as in Proposition \ref{siegel_h}.  For 
opposite parity, the two intervals $[0,\gre)+\IZ$ and $(-\gre,0] +\IZ$ are disjoint so their pullback under $S$ 
has empty intersection.

When $D\neq 0$, for any $\mat{D}{0}{k}{1}$ compute
\begin{eqnarray*}
& & \int_{\IR^2/\IZ^2}\mb{1}(\{z+\vec{u}^t \vec{m}\}<\gre )\mb
{1}(\{z+\vec{u}^t\vec{n}\} <\gre)\,du\\ &=& \int_{\IR/\IZ}\int_{\IR/\IZ}\mb{1}(\{z+Du_1+ku_2\}<\gre )\mb
{1}(\{z+u_2\} <\gre)\,du_1\,du_2 \\
&=& \int_{\IR/\IZ}{1}(\{z+u_2\} <\gre)\int_{\IR/\IZ}\mb{1}(\{z+Du_1+ku_2\}<\gre )\,du_1\,du_2;
\end{eqnarray*}
 the inner integrand is the pullback of $[0,\gre)$ by a (measure preserving) affine map of the 
form $Du_1+\tau$ giving integral $\gre$ independent of $u_2$, and then 
the remaining factor contributes another $\gre$.
\end{proof}
We now come to the main result of this section, the analog of Theorem \ref
{e_disc} for Heisenberg lattices.
\begin{thm}\label{centr}
Let $A^z_\gre$ be an epsilon-plate over $A$ of measure $m(A)>1$.  We have
\begin{equation}\label{l2}
\left\|\Th_{A^z_\gre} - \frac{\gre\,m(A)}{\zeta(2)}\right\|_2^2 \leq \frac{\gre\,m(A)}{\zeta(2)} + 20\gre^2 m(A).
\end{equation}
\end{thm}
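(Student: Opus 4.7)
The plan is to expand the $L^2$ norm, apply the second moment identity \eqref{second_moment}, classify pairs $(\vec{m},\vec{n})$ according to $D=\det(\vec{m},\vec{n})$ using Proposition \ref{orbits}, and reduce the problem to the Euclidean variance bound of Proposition \ref{e_disc}.

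First I would write
\begin{equation*}
\left\|\Th_{A^z_\gre} - \frac{\gre m(A)}{\zeta(2)}\right\|_2^2 = \int_{\XH}|\Th_{A^z_\gre}|^2\, d\mh - \frac{\gre^2 m(A)^2}{\zeta(2)^2},
\end{equation*}
using Proposition \ref{siegel_h} to identify the mean.  Then I would substitute the expansion \eqref{second_moment} and split the $(\vec{m},\vec{n})$ sum according to Proposition \ref{orbits}: the antidiagonal $\vec{n}=-\vec{m}$ contributes $0$, the diagonal $\vec{n}=\vec{m}$ contributes $\gre$, and all remaining pairs (with $D\neq 0$) contribute $\gre^2$.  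Using $\chi_A^2=\chi_A$, this produces
\begin{equation*}
\int_{\XH}|\Th_{A^z_\gre}|^2\, d\mh = \gre\int_X \Theta_A\, d\me + \gre^2 \int_X \!\!\sum_{\det(\vec{m},\vec{n})\neq 0}\!\!\chi_A(g^*\vec{m})\chi_A(g^*\vec{n})\, d\me.
\end{equation*}

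Next I would complete the inner sum to a sum over all primitive pairs and subtract the $D=0$ part.  Since for primitive $\vec{m}$ the only primitive $\vec{n}$ with $\det(\vec{m},\vec{n})=0$ are $\pm\vec{m}$, this gives
\begin{equation*}
\sum_{\det\neq 0}\chi_A(g^*\vec{m})\chi_A(g^*\vec{n}) = \Theta_A(g\IZ^2)^2 - \Theta_A(g\IZ^2) - \Theta_{A\cap(-A)}(g\IZ^2).
\end{equation*}
Two applications of the Euclidean Siegel formula eliminate the single-theta terms, and combining everything yields
\begin{equation*}
\int_{\XH}|\Th_{A^z_\gre}|^2\, d\mh = \frac{\gre\, m(A)}{\zeta(2)} + \gre^2\|\Theta_A\|_2^2 - \gre^2\,\frac{m(A)+m(A\cap(-A))}{\zeta(2)}.
\end{equation*}

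The key algebraic observation is then that
\begin{equation*}
\gre^2\|\Theta_A\|_2^2 - \frac{\gre^2 m(A)^2}{\zeta(2)^2} = \gre^2\left\|\Theta_A - \frac{m(A)}{\zeta(2)}\right\|_2^2 \leq 16\gre^2 m(A)
\end{equation*}
by Proposition \ref{e_disc}.  After subtracting the mean squared, the two remaining correction terms $-\gre^2 m(A)/\zeta(2)$ and $-\gre^2 m(A\cap(-A))/\zeta(2)$ are non-positive and can simply be dropped, giving the bound $\gre m(A)/\zeta(2) + 16\gre^2 m(A)$, which is majorized by the asserted $\gre m(A)/\zeta(2) + 20\gre^2 m(A)$.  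The main obstacle is not analytic but combinatorial: one must match each orbit in $M_D^\Orb$ with the correct correlation from Proposition \ref{orbits} and verify that after subtracting the mean squared the leading off-diagonal piece reassembles precisely into the Euclidean variance of $\Theta_A$; once this identification is visible, the Euclidean input of Proposition \ref{e_disc} finishes the argument immediately.
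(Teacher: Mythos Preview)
Your argument is correct and follows essentially the same route as the paper: expand the second moment via \eqref{second_moment}, sort pairs by $D=\det(\vec m,\vec n)$ using Proposition \ref{orbits}, reconstitute $\|\Theta_A\|_2^2$, and invoke Proposition \ref{e_disc}. If anything you are slightly more careful than the paper, since you explicitly track the antidiagonal contribution $\Theta_{A\cap(-A)}$ (which the paper silently drops when passing from the $M_0$-corrected sum to $\|\Theta_A\|_2^2$); as you note, this term is non-positive and can be discarded, and your resulting constant $16$ already implies the stated $20$.
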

\begin{proof}
Write 
\begin{equation*}
\left\|\Th_{A^z_\gre} - \frac{\gre\,m(A)}{\zeta(2)}\right\|_2^2 = \left\|\Th_{A^z_\gre}\right\|_2^2  - \left(\frac{\gre\,m(A)}{\zeta(2)}\right)^2
\end{equation*}
and use \eqref{second_moment} to expand
\begin{equation*}
\left\|\Th_{A^z_\gre}\right\|_2^2 = \int_X \sum_{\vec{m},\vec{n}\in \IZ^2_{\trm{prim}}}\chi_A(g^*\vec{m})\chi_A(g^*\vec{n})Cor_{\vec{m},\vec{n}}(\gre,z)\,d\mh(g);
\end{equation*}
by Proposition \ref{orbits} the integrand on the right hand side becomes
\begin{equation*}
\gre\sum_{\mb{m}\in M_0,\,+}\chi_A(g^*\vec{m})\chi_A(g^*\vec{n}) + \gre^2\sum_{\mb{m}\in M_D,\,D\neq 0}\chi_A(g^*\vec{m})\chi_A(g^*\vec{n});
\end{equation*}
now add and subtract the sum over $M_0$ weighted by $\gre^2$ to get
\begin{equation*}
(\gre-\gre^2)\sum_{\mb{m}\in M_0,\,+}\chi_A(g^*\vec{m})\chi_A(g^*\vec{m}) + \gre^2\sum_{\mb{m}\in M}\chi_A(g^*\vec{m})\chi_A(g^*\vec{n})
\end{equation*}
for the integrand; recall $\det{\mb{m}}=0$ and parity preservation implies 
$\vec{m}=\vec{n}$.  Integrating the two parts separately, we get
\begin{equation*}
\left\|\Th_{A^z_\gre}\right\|_2^2 = (\gre-\gre^2)\int_X \Theta\chi_A^2(L)\,d\me(L) + \gre^2 \int_X |\Theta_A(L)|^2\,d\me(L)
\end{equation*}
which simplifies to
\begin{eqnarray*}
\left\|\Th_{A^z_\gre}\right\|_2^2 &=& (\gre-\gre^2)\int_X \Theta\chi_A(L)\,d\me(L) + \gre^2 \|\Theta_A\|_2^2\\
&=& \frac{(\gre-\gre^2)m(A)}{\zeta(2)} + \gre^2\|\Theta_A\|_2^2
\end{eqnarray*}
using $\chi_A=\chi_A^2$ and the Siegel formula for Euclidean $2$-lattices 
(recall our normalization of $X$).  Now we subtract the constant term and absorb it into the second summand to get
\begin{equation}\label{raw}
\left\|\Th_{A^z_\gre}- \frac{\gre\,m(A)}{\zeta(2)}\right\|_2^2 = \frac{(\gre-\gre^2)m(A)}{\zeta(2)} + \gre^2 \|\Theta_A-\frac{m(A)}{\zeta(2)}\|_2^2.
\end{equation}
Applying Theorem \ref{e_disc} and gathering $\gre^2$ terms together, we obtain
\begin{equation}\label{disc}
\left\|\Th_{A^z_\gre}- \frac{\gre\,m(A)}{\zeta(2)}\right\|_2^2 \leq \frac{\gre\,m(A)}{\zeta(2)} + 20\gre^2 m(A).
\end{equation}
\end{proof}
Theorem \ref{centr} immediately implies the analogues of Theorems \ref{e_lat} and \ref{e_sq_div} for the plate distribution.
\begin{cor}
We have the following bound on the probability of a Heisenberg lattice missing a plate $A^z_\gre$:
\begin{equation}
\mh(\Lambda\in \XH: A^z_\gre \cap \Lambda = \emptyset)\leq \frac{C}{m(A^z_\gre)}.
\end{equation}
\end{cor}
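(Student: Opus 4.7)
This is a standard Chebyshev argument building on Theorem \ref{centr}; no substantively new ideas are required. The plan is to bound the event that the plate misses all lattice points by the event that the nil-theta transform vanishes, and then control the latter via a second-moment estimate. Concretely, if $A^z_\gre\cap \Lambda=\emptyset$ then in particular $A^z_\gre$ contains no primitive points of $\Lambda$, so $\Th_{A^z_\gre}(\Lambda)=0$; hence
\[
\mh\!\left(\Lambda\in\XH:\,A^z_\gre\cap\Lambda=\emptyset\right)\leq \mh\!\left(\Lambda:\,\Th_{A^z_\gre}(\Lambda)=0\right).
\]

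Writing $\mu:=m(A^z_\gre)/\zeta(2)=\gre\, m(A)/\zeta(2)$ for the mean provided by Proposition \ref{siegel_h}, the vanishing event is contained in $\{|\Th_{A^z_\gre}-\mu|\geq \mu\}$, so by Chebyshev's inequality and Theorem \ref{centr},
\[
\mh\!\left(\Th_{A^z_\gre}=0\right)\leq \mu^{-2}\left\|\Th_{A^z_\gre}-\mu\right\|_2^2 \leq \frac{\mu+20\gre^2 m(A)}{\mu^2}.
\]
Since $0<\gre<1$ we have $20\gre^2 m(A) = 20\gre\cdot m(A^z_\gre)\leq 20\,m(A^z_\gre)$, so both terms in the numerator are bounded by constants times $m(A^z_\gre)$ while the denominator equals $\zeta(2)^{-2}m(A^z_\gre)^2$. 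The ratio is therefore at most $C/m(A^z_\gre)$ for an absolute constant $C$, which is the claim. There is no real obstacle here: all the analytic work sits in Theorem \ref{centr}, and this corollary is the Heisenberg analogue of the standard derivation of Theorem \ref{e_lat} from the Euclidean variance bound of Proposition \ref{e_disc}.
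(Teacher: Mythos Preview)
Your argument is correct and is precisely the Chebyshev derivation the paper has in mind: it offers no separate proof, simply noting that the corollary follows immediately from Theorem \ref{centr}. Your fill-in of the details is the expected one, with the correct handling of the two terms in the variance bound and the standing hypotheses $0<\gre<1$, $m(A)>1$.
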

\begin{cor}\label{cheby}
The average discrepancy of lattice points in a plate $A^z_\gre$ satisfies the Chebyshev inequality
\begin{equation}
\mh\left(\Lambda\in \XH: \left|\# (A^z_\gre\cap \Lambda_{\textrm{prim}}) -\frac{m(A^z_\gre)}{\zeta(2)}\right|>r\sqrt{m(A^z_\gre)}\right)\leq \frac{C}{r^2}.
\end{equation}
\end{cor}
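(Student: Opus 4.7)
The plan is to derive Corollary \ref{cheby} as a direct Chebyshev-type consequence of the $L^2$ variance bound established in Theorem \ref{centr}, combined with the Siegel mean-value identity from Proposition \ref{siegel_h}. First I would observe that by Definition \ref{theta-def} the count $\#(A^z_\gre\cap \Lambda_{\textrm{prim}})$ is exactly $\Th_{A^z_\gre}(\Lambda)$, and by Proposition \ref{siegel_h} the $L^1$-mean of this function equals $\gre\, m(A)/\zeta(2) = m(A^z_\gre)/\zeta(2)$, so the quantity inside the absolute value in the statement is the centered version $\Th_{A^z_\gre}-\gre m(A)/\zeta(2)$ whose $L^2$-norm is controlled by Theorem \ref{centr}.

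Next I would apply the classical Chebyshev (Markov) inequality to this centered random variable on the probability space $(\XH,\mh)$, which yields
\begin{equation*}
\mh\!\left(\Lambda\in\XH:\left|\Th_{A^z_\gre}(\Lambda)-\tfrac{m(A^z_\gre)}{\zeta(2)}\right|>r\sqrt{m(A^z_\gre)}\right)\leq \frac{\left\|\Th_{A^z_\gre}-\tfrac{\gre\,m(A)}{\zeta(2)}\right\|_2^2}{r^2\, m(A^z_\gre)}.
\end{equation*}
Substituting the bound from \eqref{l2} gives numerator at most $\gre\,m(A)/\zeta(2)+20\gre^2 m(A)$, while the denominator contains $m(A^z_\gre)=\gre\, m(A)$. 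Dividing, the first term contributes $1/(\zeta(2)r^2)$ and the second contributes $20\gre/r^2\leq 20/r^2$ since $0<\gre<1$, so the total is bounded by an absolute constant $C$ divided by $r^2$.

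The argument is essentially a two-line calculation once Theorem \ref{centr} is in hand; there is no real obstacle. The only point requiring a moment of care is to confirm that the normalization $m(A^z_\gre)=\gre\, m(A)$ is used consistently on both sides so that the prefactor $\gre$ cancels cleanly and an $\gre$-independent constant $C$ emerges, and to note that the hypothesis $m(A)>1$ from Theorem \ref{centr} can be absorbed into $C$ (the case $m(A)\leq 1$ being trivial by enlarging the constant, or by monotonicity after embedding $A$ into a slightly larger set).
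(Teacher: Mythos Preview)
Your proposal is correct and matches the paper's approach: the paper states that Corollary~\ref{cheby} follows immediately from Theorem~\ref{centr}, and what you have written is precisely the Chebyshev computation that makes this explicit. Your handling of the normalization $m(A^z_\gre)=\gre\,m(A)$ and the remark about the hypothesis $m(A)>1$ are both appropriate.
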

From this information, we can obtain discrepancy estimates from sets built 
out of plates in a controlled number of steps.  We illustrate this with the example of a stout cylinder:

\begin{cor}\label{boxes}
Let $C = A\times I$ with $m(A)>1$ and $|I| \leq m(A)^{\frac{1}{2}-\delta}$.  Then 
\begin{equation}
\mh\left(\Lambda\in \XH: \left|\# (C\cap \Lambda_{\textrm{prim}}) -\frac{m(C)}{\zeta(2)}\right|>rm(C)^{1-\delta}\right)\leq \frac{C}{r^2}.
\end{equation}
\end{cor}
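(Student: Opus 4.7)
The plan is to slice the cylinder $C=A\times I$ in the vertical direction into thin plates and reduce the estimate to Theorem~\ref{centr}. Partition $I$ into $N:=\lceil |I|\rceil$ consecutive sub-intervals of common length $\gre:=|I|/N\leq 1$. This gives a disjoint decomposition $C=\bigsqcup_{j=1}^N A^{z_j}_\gre$ into $\gre$-plates over $A$, so that
\[
\Th_C-\frac{m(C)}{\zeta(2)}\;=\;\sum_{j=1}^N\left(\Th_{A^{z_j}_\gre}-\frac{\gre\,m(A)}{\zeta(2)}\right).
\]

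Next I would apply the $L^2(\XH,\mh)$-triangle inequality together with the plate variance bound \eqref{l2} term by term, using $\gre\leq 1$ to absorb $\gre m(A)/\zeta(2)\leq \gre m(A)$ into the constant:
\[
\left\|\Th_C-\frac{m(C)}{\zeta(2)}\right\|_2 \;\leq\; N\sqrt{21\,\gre\, m(A)},\qquad \text{hence}\qquad \left\|\Th_C-\frac{m(C)}{\zeta(2)}\right\|_2^2\;\leq\; 21\,N\,|I|\,m(A)\;\leq\; 42\,|I|^2\,m(A),
\]
where I used $N\gre=|I|$ and $N\leq 2|I|$ (valid for $|I|\geq 1$; if $|I|<1$ then $C$ is itself a single $\gre$-plate and the conclusion follows directly from Corollary~\ref{cheby}). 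Chebyshev's inequality applied at threshold $t=r\,m(C)^{1-\delta}$ with $m(C)=|I|\,m(A)$ then yields
\[
\mh\!\left(\left|\Th_C-\frac{m(C)}{\zeta(2)}\right|>r\,m(C)^{1-\delta}\right)\;\leq\; \frac{42\,|I|^2\,m(A)}{r^2\,m(C)^{2-2\delta}}\;=\;\frac{42\,|I|^{2\delta}}{r^2\,m(A)^{1-2\delta}}.
\]

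It remains to check that the right-hand side is an absolute multiple of $r^{-2}$ under the scaling hypothesis on $|I|$. Substituting $|I|\leq m(A)^{1/2-\delta}$ yields $|I|^{2\delta}\leq m(A)^{\delta-2\delta^2}$, reducing the bound to $42\,m(A)^{3\delta-2\delta^2-1}/r^2$; the exponent $3\delta-2\delta^2-1$ is nonpositive exactly when $(2\delta-1)(\delta-1)\geq 0$, which holds throughout the useful range $\delta\in(0,1/2]$. The only real obstacle is this algebraic bookkeeping: the Minkowski step costs a factor of $N$ compared with what genuine orthogonality of the plate contributions would give, and the hypothesis $|I|\leq m(A)^{1/2-\delta}$ is precisely what absorbs this loss into the weakened threshold $m(C)^{1-\delta}$ in place of $\sqrt{m(C)}$. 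No deeper difficulty arises—the substance of the corollary lives entirely in Theorem~\ref{centr}.
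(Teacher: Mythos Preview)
Your argument is correct and follows the same route as the paper: slice the cylinder into at most $\lceil |I|\rceil\le m(A)^{1/2-\delta}+1$ plates of height $\le 1$, apply the $L^2$-triangle inequality (what the paper loosely calls ``Cauchy--Schwarz'') together with Theorem~\ref{centr}, and finish with Chebyshev. Your verification of the exponent $3\delta-2\delta^2-1\le 0$ is more explicit than the paper's, which simply asserts the final bound $\|\Th_S-m(S)/\zeta(2)\|_2\le 10\,m(S)^{1-\delta}$ without writing out the algebra.
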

\begin{proof}
Split $C$ into at most $m(A)^{\frac{1}{2}-\delta}$ plates $P_i$ of height at most 
$1$; since these sets are disjoint, we can split $$\Th_{S} - 
\frac{m(S)}{\zeta(2)}  = \sum_{P_i}\left(\Th_{P_i} - 
\frac{m(P_i)}{\zeta(2)}\right);$$ 
then apply the Cauchy-Schwartz inequality to \eqref{l2} after taking square roots to get
\begin{equation*}
\left\|\Th_{S} - \frac{m(S)}{\zeta(2)}\right\|_2 \leq 10 m(S)^{1-\delta}.
\end{equation*}
  Then proceed as in the proof of Chebychev's inequality with the appropriate exponents.
\end{proof}
\begin{rem}
The result above is presumably not optimal; we expect that good discrepancy 
estimates will hold for approximately cubical cylinders.  
However, this 
requires higher moment analogues of \ref{e_disc} (which are not known) and a corresponding 
treatment of higher correlations in \ref{cor_def}.  
In any case, these sets 
come close to the limit of sets we should expect to satisfy good deviation 
estimates as the next section demonstrates.
\end{rem}
\section{High discrepancy sets}
Here we construct a wide variety of sets for which an estimate like the one 
in Corollary \ref{cheby} cannot hold; the argument is an extension of Example \ref
{bad_cyl}.  The point of the following proposition is to show that firstly high discrepancy sets 
need not be anchored to any specific point like the origin, and secondly do 
not need to have an approximately cylindrical shape (although they will be built out of cylinders).
\begin{prop}\label{high-disc}
Let $B(0,R)$ be a large ball in $\IR^2$ centered at the origin and fix $\gre>0$.  Let $$Z_\circ = (\IZ^2 + B(0,\gre))\cap B(0,R)$$ be a thickening of the 
standard lattice inside the fixed ball. There exist Borel 
sets $S\subset \IR^3$ such that: 
\begin{enumerate}
\item $S$ has arbitrarily large measure, 
\item for any cylinder $C$ and $\gre'>0$, $$m(C\triangle S)>_{\gre'} m(S)^{1-\gre'},$$
\item $\pi_{\textrm{flat}}(S)=A$ is an arbitrary Borel set of positive measure in $B(0,R)\setminus Z_\circ$ and
\item the inequality $$\mh(\Lambda\in \XH: S \cap 
\Lambda = \emptyset)\geq \delta $$ holds, where $\delta$ depends only on $\gre$.
\end{enumerate}
\end{prop}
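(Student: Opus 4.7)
The plan is to take $S$ to be a disjoint union of a few ``plates'' stacked at well-separated heights over a partition of $A$, so that $S$ is genuinely non-cylindrical while inheriting the vertical miss mechanism of Example~\ref{bad_cyl}. Explicitly, given a Borel set $A \subset B(0,R) \setminus Z_\circ$ of positive measure, split it into four disjoint Borel pieces $A_1, A_2, A_3, A_4$ of equal measure $a := m(A)/4$ (possible since Lebesgue measure is non-atomic), fix a large parameter $L$, set $M = 3L$, and define
\[ S \;=\; \bigsqcup_{n=1}^{4} A_n \times [\,nM,\, nM+L\,]. \]
By construction $\pi_{\textrm{flat}}(S) = A$, giving~(3), and $m(S) = 4aL = L\, m(A)$, which can be made arbitrarily large in $L$, giving~(1).

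For~(4), by continuity of the action of $\SL(2,\IR)$ on $\IR^2$ there is a neighborhood $V$ of the identity, depending only on $\gre$ and the fixed $R$, such that every $g \in V$ satisfies $\|gv - v\| < \gre$ for all $v \in \IZ^2 \cap B(0,R+1)$. For $\Lambda = g\IZ^2$ with $g \in V$ this forces $\Lambda \cap B(0,R) \subset \IZ^2 + B(0,\gre)$, hence $\Lambda \cap B(0,R) \subset Z_\circ$ and so $\Lambda \cap A = \emptyset$. Because $S \subset A \times \IR$, every Heisenberg lattice $\Lambda_H$ projecting to such a $\Lambda$ satisfies $\Lambda_H \cap S = \emptyset$, and Corollary~\ref{product} gives
\[ \mh(\Lambda_H \in \XH : S \cap \Lambda_H = \emptyset) \;\geq\; \me(U) \;=:\; \delta, \]
where $U$ is the image of $V$ in $X$ and $\delta > 0$ depends only on $\gre$.

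For~(2), fix any cylinder $C = C_0 \times I$, write $\ell = |I|$, and let $k$ be the number of indices $n \in \{1,2,3,4\}$ for which $I_n = [nM, nM+L]$ meets $I$. Since consecutive $I_n$ are separated by the gap $M - L = 2L$, one has $\ell \geq 2(k-1)L$ whenever $k \geq 1$. Starting from
\[ m(C \triangle S) \;=\; m(C_0)\,\ell + 4La - 2\,m(C \cap S) \]
and using the two bounds $m(C \cap S) \leq L\, m(C_0)$ and $m(C \cap S) \leq kLa$, an elementary case split on $k$ and on whether $m(C_0) \leq ka$ or $m(C_0) > ka$ yields $m(C \triangle S) \geq m(S)/2$ in every case; the extremal configuration is $k = 1$ with $C$ coinciding with a single slab, which still leaves the other three uncovered. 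Choosing $L$ large enough that $m(S) \geq 1$ then gives $m(C \triangle S) \geq \tfrac{1}{2} m(S) \geq \tfrac{1}{2} m(S)^{1 - \gre'}$ for every $\gre' > 0$.

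The genuine work is in~(2): the adversary can try to cover many slabs at once by widening $I$, but every additional slab forces at least one full gap of empty vertical space multiplied by $m(C_0)$, and $m(C_0)$ must itself be bounded below as soon as $C$ absorbs a non-trivial portion of $S$. Once the spacing $M$ is a constant multiple of $L$ this bookkeeping is entirely direct. Item~(4) is immediate from the Example~\ref{bad_cyl} mechanism together with the product decomposition of Corollary~\ref{product}.
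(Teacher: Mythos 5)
Your proposal is correct, and while items (1), (3) and (4) are handled exactly as in the paper (the paper likewise chooses a neighborhood $U$ of the identity in $\SL(2,\IR)$ with $(\overline{U}\IZ^2)\cap B(0,R)\subset Z_\circ$ and invokes the projection inequality preceding Example \ref{bad_cyl}), your treatment of item (2) is genuinely different and simpler. The paper partitions $A$ into $k=m(A)^N$ pieces of geometrically decreasing measure $\approx 2^{-i}m(A)$ and erects cylinders over them at dyadically growing heights $I_i=[2^{i-1},2^i]$, so that each block has equal measure $\approx m(A)$; the anti-cylinder bound then comes from an optimization over which consecutive sub-collection of blocks a competing cylinder can capture, yielding $m(S\triangle C)\geq m(A)(2^l-l-2)$ or $m(A)(k-l)$ according to whether the competitor swallows many or few blocks. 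Your construction instead uses a bounded number (four) of equal-mass slabs with spacing proportional to their height, and the two bounds $m(C\cap S)\leq L\,m(C_0)$, $m(C\cap S)\leq kLa$ together with $\ell\geq 2(k-1)L$ close the argument in a short case split; I checked the cases ($k\leq 2$ with $m(C\cap S)\leq La$; $k=2$ with $m(C\cap S)>La$, forcing $m(C_0)>a$ and $\ell\geq 2L$; and $k\geq 3$, forcing $\ell\geq 4L$ so that $2m(C\cap S)\leq 2Lm(C_0)\leq m(C_0)\ell/2$) and each indeed gives $m(C\triangle S)\geq 2La=m(S)/2$. This is a strictly stronger conclusion than the stated $m(S)^{1-\gre'}$ (no cylinder approximates $S$ to within a constant factor of its measure), obtained with far less bookkeeping; what the paper's multi-scale staircase buys is the explicit flexibility, stated in its proof, of distributing the mass of $S$ arbitrarily over countably many bases $A_i$, illustrating that the high-discrepancy phenomenon persists across a wider family of shapes. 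Two minor points to tidy: state explicitly that $V$ must also be small enough that $\|g^{-1}\|R\leq R+1$, so that every $\Lambda$-point of $B(0,R)$ is the image of an integer point of $B(0,R+1)$; and note, as the paper implicitly does, that $\delta$ depends on the fixed radius $R$ as well as on $\gre$.
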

\begin{proof}
  Let $U$ be a neighborhood of the 
identity in $\SL(2,\IR)$ such that $$(\overline{U}\IZ^2)\cap B(0,R)\subset Z_\circ$$ and $\me(U)=\delta$ its measure.  

Generally to meet requirements $1$, $3$ and $4$ we can pick an arbitrary Borel $A\subset B(0,R)\setminus Z_\circ$ of positive 
measure, partition it into finitely or countably many disjoint Borel sets $A_i$ of 
positive measure and over each set erect a cylinder $C_i=A_i\times I_i$ of 
any desired height to form $S$ so that $$\sum_{i} |I_i|m(A_i)=m(S)$$ is 
arbitrarily large and distributed in an arbitrary way over the bases $A_i$.

To meet the second requirement, observe that all that matters in the 
computation is the measures of the $A_i$ and the placements of the cylinders 
over them.  Thus we can visualize the $A_i$ as disjoint horizontal intervals 
on the real line arranged in decreasing size and the $I_i$ as vertical intervals at prescribed level and 
of prescribed height.

Choose a large finite partition of $A$ into $k=m(A)^N$ parts so that each $A_i$ has measure 
approximately $\frac{1}{2^i} m(A)$ and choose segments $I_1=[1,2]$, 
$I_2=[2,4]$, $I_3=[4,8]$ and so 
on.  Then $m(S) \sim km(A)$ and by controlling $k$ we can make $m(S)$ as large 
as possible.  Note that the top of the cylinder over $A_i$ is at $t_i\sim 2^{i+1}$.

A cylinder that will minimize the difference 
$C\triangle S$ will necessarily have base inside $A$, so we may assume that 
$C=(\bigcup_{j\in J}A_j)\times I$ where the union is over some sub-collection 
of the partition.  Assume $C$ contains precisely $l$ of the $k$ cylinders, with $l\gg 1$.  

Then $C$ is 
based on at least $l$ and at most $(l+2)$ of the $A_i$.  Let $r+1$ be the smallest index so that 
the cylinder over $A_{r+1}$ is contained in $C$.  In order to minimize the 
difference, the optimal cylinder has to be based on all $A_j$ for 
$j=r+1,\cdots,r+l$ (for each $j>r+1$ skipped, the gains from removing $C_j$ 
are offset by the fact that indices are shifted at least one place, doubling 
the cost of the cylinder over $A_r$; this beats the gain because the measure 
of $A_r$ is at least twice the measure of $A_j$).

For this configuration, the height of the cylinder is 
$t_{r+l}-t_r=2^{r+1}-2^r$ and the base has measure $m(A)2^{-r}(1-2^{-l})$; 
thus its measure is $m(A)2^l$.  In contrast, the measure of $S\cap C$ is 
within $(l\pm 2)m(A)$.  Therefore, $$m(S\triangle C)\geq m(A)(2^l-l-2).$$

If $l\geq \log_2(k)$ we get our result.  If $l\leq \log_2(k)$, then $$m(S\triangle C) \geq m(S)-m(C\cap S) \geq m(A)(k-l)$$

which again gives the result.

Finally, by the discussion preceding Example \ref{bad_cyl} we know that $$\mh(\Lambda\in \XH: S \cap 
\Lambda = \emptyset)\geq \me(\Lambda\in X: A \cap 
\Lambda = \emptyset)\geq \me(U)=\delta .$$ Since our choice of $U$ depended only on $\gre$, we are done.
\end{proof}


\begin{thebibliography}{100}
\bibitem{a_m}J.S. Athreya and G.A. Margulis, Logarithm laws for unipotent flows I, J. Mod. Dyn \textbf{3}(2009), 359-378.
\bibitem{Aus}L. Auslander, Lecture notes on nil-theta functions, Regional conference series in mathematics no.34, American Mathematical Society, 1977.
\bibitem{GNT} R. Garg, A. Nevo and K. Taylor, The lattice point counting problem on the Heisenberg groups, arXiv:1404.6089.
\bibitem{Lang} S. Lang, $\SL(2,\IR)$, Springer-Verlag, 1985.
\bibitem{Rag}M. S. Raghunathan, Discrete subgroups of Lie groups, Springer-Verlag, 1972.
\bibitem{Rand}B. Randol, A group theoretic lattice-counting problem, in Problems in Analysis, Princeton University Press (1970), pp. 291-295.
\bibitem{Rog}C. A. Rogers, Mean values over the space of lattices, Acta Math. vol.94 (1955), pp. 249-287.
\bibitem{Schm} W. Schmidt, A metrical theorem in geometry of numbers. Transactions of the American Mathematical Society (1960), pp. 516-529. 
\bibitem{Sieg}C. L. Siegel, A mean value theorem in the geometry of numbers, Ann. of Math., vol.46 (1945), pp. 340-347.
\end{thebibliography}
\end{document}